\newtheorem{thm}{Theorem}[section]
\newtheorem{prop}[thm]{Proposition}
\newtheorem{cor}[thm]{Corollary}
\newtheorem{lem}[thm]{Lemma}
\theoremstyle{remark}
\newtheorem{rem}[thm]{Remark}
\numberwithin{equation}{section}
\DeclareMathOperator\AGL{AGL}%
\DeclareMathOperator\AOrth{AO}%
\DeclareMathOperator\Char{Char}%
\DeclareMathOperator\diag{diag}%
\DeclareMathOperator\DOrth{\Delta O}%
\DeclareMathOperator\GL{GL}%
\DeclareMathOperator\id{id}%
\DeclareMathOperator\Orth{O}%
\DeclareMathOperator\spn{span}%
\newcommand{\x}{\times} 
\newcommand{\Trans}{\mathrm T}
\newcommand{\trenn}{{-\hspace{0pt}}}
\newcommand{\li}{\langle} 
\newcommand{\re}{\rangle} 
\newcommand{\lire}{\li\,\cdot\,,\cdot\,\re} 
\newcommand{\VQ}{(\vV,Q)} 
\newcommand{\Oweak}{\Orth'} 
\newcommand{\AOweak}{\AOrth'} 
\newcommand{\DOweak}{\DOrth'}
\newcommand{\FV}{{F\times\vV}} 
\newcommand{\eps}{\varepsilon}
\renewcommand{\phi}{\varphi}
\renewcommand{\theta}{\vartheta}
\renewcommand{\kappa}{\varkappa}
\newcommand{\cF}{{\mathcal F}}
\newcommand{\cL}{{\mathcal L}}
\newcommand{\cP}{{\mathcal P}}
\newcommand{\bA}{{\mathbb A}}
\newcommand{\bP}{{\mathbb P}}
\newcommand{\bR}{{\mathbb R}}
\newcommand{\vS}{{\bm S}}
\newcommand{\vT}{{\bm T}}
\newcommand{\vV}{{\bm V}}
\newcommand{\vX}{{\bm X}}
\newcommand{\va}{{\bm a}}
\newcommand{\vc}{{\bm c}}
\newcommand{\ve}{{\bm e}}
\newcommand{\vf}{{\bm f}}
\newcommand{\vo}{{\bm o}}
\newcommand{\vp}{{\bm p}}
\newcommand{\vr}{{\bm r}}
\newcommand{\vs}{{\bm s}}
\newcommand{\vt}{{\bm t}}
\newcommand{\vu}{{\bm u}}
\newcommand{\vv}{{\bm v}}
\newcommand{\vw}{{\bm w}}
\newcommand{\vx}{{\bm x}}
\newcommand{\vy}{{\bm y}}
\begin{document}
\sloppy
\author{Hans Havlicek\thanks{https://orcid.org/0000-0001-6847-1544}}
\title{Affine Metric Geometry and Weak Orthogonal Groups}
\date{}
\maketitle

\begin{abstract}\noindent%
By following the ideas underpinning the well{\trenn}established ``homogeneous
model'' of an $n${\trenn}dimensional Euclidean space, we investigate whether
the motion group or the weak motion group of an $n${\trenn}dimensional affine
metric space on a vector space $\vV$ over an arbitrary field admits a specific
faithful linear representation as weak orthogonal group of an
$(n+1)${\trenn}dimensional metric vector space. Apart from a few exceptions,
such a representation exists precisely when the metric structure on $\vV$ is
given by a quadratic form with a non{\trenn}degenerate polar form.
\par\noindent
\textbf{Mathematics Subject Classification (2020):}  51F25 15A63 \\
\textbf{Key words:} affine metric space; motion group; weak motion group;
linear representation; weak orthogonal group.
\end{abstract}

\section{Introduction}\label{se:intro}

There is a widespread literature on the problem of describing the motion group
of the Euclidean space $\bR^n$ (equipped with the standard inner product) by
means of a Clifford algebra. One of the known approaches makes use of the
so-called ``homogeneous model''. It is based upon the introduction of
homogeneous coordinates or, said differently, the embedding of $\bR^n$ in the
projective space $\bP(\bR^{n+1})$, and it fits into the following more general
construction: First, $\bR^n$ is equipped with an inner product of signature
$(p,n-p,0)$. Then the dual vector space of $\bR^{n+1}$, in symbols
$(\bR^{n+1})^*$, is equipped with an inner product of signature $(p,n-p,1)$,
and the corresponding Clifford algebra is being used. So, the inner product on
$(\bR^{n+1})^*$ is degenerate with a one{\trenn}dimensional radical. See, for
example, C.~G.~Gunn \cite{gunn-19a}, D.~Klawitter \cite{klaw-15a}, D.~Klawitter
and M.~Hagemann \cite{klaw+h-13a}. We also refer to J.~M.~Selig
\cite{seli-05a}, \cite{seli-22a}, where in the Euclidean case ($p=n$) the
signature $(0,n,1)$ is used instead of $(n,0,1)$. The cited sources contain a
wealth of references to previous work.
\par
We are interested in the generalisation of the above results to arbitrary
affine metric spaces of finite dimension. In Sections~\ref{se:prelim} and
\ref{se:transvections}, we collect some basic facts from linear algebra and we
establish auxiliary results about transvections and dilatations, which are
employed in Section~\ref{se:main}. Our starting point in Section~\ref{se:aff}
is the affine space $\bA(\vV)$ on a finite{\trenn}dimensional vector space
$\vV$ over an arbitrary field $F$. By analogy to the real case, we consider the
$F$-vector spaces $\vV^*$ (\emph{i.e.}\ the dual vector space of $\vV$), $\FV$
and $\FV^*$; we identify the latter with the dual vector space of $\FV$. Our
main tool is a faithful linear representation
$\beta\colon\AGL(\vV)\to\GL(\FV^*)$, where $\AGL(\vV)$ denotes the group of all
affinities of $\vV$ onto itself. Then we recall the notion of an affine metric
space $\bA\VQ$, which arises by equipping $\bA(\vV)$ with a quadratic form
$Q\colon \vV\to F$.
\par
In Section~\ref{se:main}, we address the main problem: Find all dyads of metric
vector spaces $\VQ$ and $(\FV^*,\tilde{Q})$ such that $\bA\VQ$ has a motion
group or a weak motion group whose $\beta$-image coincides with the weak
orthogonal group of $(\FV^*,\tilde{Q})$. The transformations of the latter
group allow for a neat description in terms of the corresponding Clifford
algebra. However, this topic is beyond the scope of the present note; see
\cite{havl-21b} for further details and an extensive bibliography.
Proposition~\ref{prop:AO-beta=Ow} provides solutions to the above problem under
the extra assumptions that, firstly, the polar form of $Q$ is
non{\trenn}degenerate and, secondly, $\tilde{Q}$ is a non-zero scalar multiple
of a quadratic form $Q^\uparrow$ arising from $Q$ by an explicit construction;
see Proposition~\ref{prop:Q-auf-ab}. The polar form of $Q^\uparrow$ has a
particular one{\trenn}dimensional radical and, moreover, $Q^\uparrow$ maps all
vectors of the radical to $0$. In Remarks~\ref{rem:literatur} and
\ref{rem:reflect}, we refer to closely related outcomes by F.~Bachmann
\cite{bach-73a}, E.~W.~Ellers \cite{elle-84a}, E.~W.~Ellers and H.~H\"{a}hl
\cite{elle+h-84a}, J.~Helmstetter \cite{helm-05a}, E.~M.~Schr\"{o}der
\cite{schroe-92a}, H.~Struve and R.~Struve \cite{stru+s-22a}, H.~Wolff
\cite{wolff-67a}, \cite{wolff-67b}. Remark~\ref{rem:koo} contains the
transition from $Q$ to $Q^\uparrow$ in terms of coordinates. Our main result is
Theorem~\ref{thm:umkehr}. Apart from quite a few exceptional cases, which occur
when both $\dim\vV$ and $|F|$ are ``very small'', there are only the solutions
as in Proposition~\ref{prop:AO-beta=Ow}. All exceptional cases are itemised in
Remark~\ref{rem:tab}, Tables~\ref{tab:1}--\ref{tab:4}; in some of these cases
the polar form of $Q$ fails to be non{\trenn}degenerate. In conclusion, we
switch to the ``projective point of view'' by going over to the projective
space $\bP(\FV^*)$. It will turn out that this merely leads us to yet another
(trivial) exceptional case, but otherwise does not give rise to new results.

\section{Preliminaries}\label{se:prelim}
Throughout this article, we consider only \emph{finite{\trenn}dimensional}
vector spaces over a (commutative) field $F$. In what follows, we fix our
notation and we collect some basic facts; see \cite[Ch.~II]{bour-98a},
\cite{gruen+w-77a}, \cite{havl-22b} and the sources listed below.
\par
Let $\vV$ be a vector space. We write $\vV^*$ for its \emph{dual vector space},
$\lire\colon {\vV^* \x \vV}\to F$ for the \emph{canonical pairing}, $\Char F$
for the characteristic of $F$ and we put $F^\times:=F\setminus\{0\}$. The zero
vector of $\vV$ (resp.\ $\vV^*$) is denoted by $\vo$ (resp.\ $\vo^*$). Each
subset $\vS\subseteq \vV$ determines its \emph{annihilator} $\vS^\circ
:=\bigl\{\va^*\in \vV^*\mid \li \va^*,\vs\re =0 \mbox{~for all~}
\vs\in\vS\bigr\}$, which is a subspace of $\vV^*$. In particular, given any
subspace $\vT$ of $\vV$, in symbols $\vT\leq\vV$, we have $\dim\vT^\circ =
\dim\vV-\dim\vT$. We consider $\vV$ as the dual vector space of $\vV^*$ by
identifying $\vx\in \vV$ with the linear form $\li\,\cdot\,,\vx\re\colon
\vV^*\to F$. In this way our results apply, \emph{mutatis mutandis}, to
$\vV^*$.
\par
Let $\tilde{\vV}$ be a vector space, too, and let $\eta\colon\tilde\vV\to\vV$
be a linear mapping. The \emph{transpose} of $\eta$ is given as
$\eta^\Trans\colon\vV^*\to\tilde\vV^*\colon \va^*\mapsto \va^*\circ\eta$. Thus,
for all $\va^*\in{\vV}^*$ and all $\tilde\vx\in\tilde\vV$, we have $\bigl\li
\eta^\Trans(\va^*),\tilde\vx\bigr\re = \bigl\li \va^*,\eta(\tilde\vx)\bigr\re$.
The mapping $\eta^\Trans$ is linear and satisfies $(\eta^\Trans)^\Trans =
\eta$. The image of $\eta^\Trans$ and the kernel of $\eta$ are related by
$\eta^\Trans({\vV}^*) = (\ker\eta)^\circ $.
\par
All linear bijections of $\vV$ onto itself form the \emph{general linear group}
$\GL(\vV)$. Any pair $(\vc^*,\vf)\in\vV^*\times\vV$ such that
$\li\vc^*,\vf\re\neq -1$ gives rise to the linear bijection
\begin{equation}\label{eq:delta}
    \delta_{\vc^*,\,\vf}\colon \vV\to\vV \colon \vx \mapsto \vx + \li\vc^*,\vx\re \vf .
\end{equation}
If $\vc^*=\vo^*$ or $\vf=\vo$, then $\delta_{\vc^*,\,\vf}$ equals the identity
$\id_\vV$. Otherwise, $\delta_{\vc^*,\,\vf}\neq\id_\vV$ fixes precisely the
vectors of the hyperplane $\ker\vc^*\leq\vV$ and $\delta_{\vc^*,\,\vf}$ is
called a \emph{transvection} (resp.\ \emph{dilatation}) provided that
$\li\vc^*,\vf\re = 0$ (resp.\ $\li\vc^*,\vf\re \neq 0$); see \cite{elle-90a},
\cite[p.~20]{tayl-92a}.\footnote{The term ``dilatation'' appears with a
different meaning, among others, in \cite[p.~26]{schroe-91b}.}
\par
Upon choosing a vector $\vf\in\vV\setminus\{\vo\}$, we put
\begin{equation}\label{eq:Delta}
    \Delta(\vV,\vf) := \bigl\{\delta_{\va^*,\,\vf}\mid \va^*\in\vV^*
    \mbox{~and~}
    \li\va^*,\vf\re\neq -1\bigr\} ,
\end{equation}
which is a subgroup of $\GL(\vV)$. It is easily checked that there is a
bijective mapping
\begin{equation}\label{eq:a-stern-bij}
    \bigl\{\va^*\in\vV^*\mid \li\va^*,\vf\re \neq -1\bigr\} \to
    \Delta(\vV,\vf)
    \colon \va^*\mapsto \delta_{\va^*,\,\vf} .
\end{equation}
\par
Next, let $Q\colon \vV\to F$ be a quadratic form. So $\VQ$ is a \emph{metric
vector space} as in \cite[1.1]{schroe-95a}; see also \cite[Ch.~IX]{bour-07a},
\cite[Sect.~1]{havl-21b}, \cite[\S~7]{schroe-92a}, \cite{tayl-92a}. Then
$B\colon\vV\times\vV\to F\colon (\vx,\vy)\mapsto Q(\vx+\vy)-Q(\vx)-Q(\vy)$
denotes the \emph{polar form} of $Q$. We have $B(\vx,\vx) = 2 Q(\vx)$ for all
$\vx\in\vV$. From $B$ being bilinear, we get
\begin{equation}\label{eq:D}
    D \colon \vV\to \vV^*\colon \vx \mapsto D(\vx):=B(\vx,\cdot\,)
\end{equation}
as the \emph{induced linear mapping} of $B$. The transpose of $D$ takes the
form $D^\Trans\colon \vV\to\vV^*$. Using \eqref{eq:D} and the fact that $B$ is
a symmetric bilinear form, it follows $\li D^\Trans(\vy),\vx\re = \li
D(\vx),\vy\re = B(\vx,\vy) = B(\vy,\vx) = \li D(\vy),\vx\re$ for all
$\vx,\vy\in \vV$. Hence we have
\begin{equation}\label{eq:D=DT}
    D = D^\Trans .
\end{equation}
Vectors $\vx,\vy\in\vV$ are \emph{orthogonal}, in symbols $\vx\perp\vy$,
precisely when $B(\vx,\vy)=0$. Given $\vS\subseteq\vV$ the set
$\vS^\perp:=\{\vx\in\vV\mid \vx\perp\vs \mbox{~for all~} \vs\in \vS\}$ is a
subspace of $\vV$. In particular, $\vV^\perp$ is called the \emph{radical} of
$B$. Then
\begin{equation}\label{eq:rad=ker}
    \vV^\perp = \bigl\{\vx\in\vV\mid \li D(\vx),\vy\re = 0
    \mbox{~for all~}\vy\in\vV\bigr\} =
    \ker D .
\end{equation}
Also, \eqref{eq:rad=ker} and \eqref{eq:D=DT} imply
\begin{equation}\label{eq:D(V)}
    (\vV^\perp)^\circ = (\ker D)^\circ = D^\Trans(\vV) = D(\vV) .
\end{equation}
The \emph{rank} of $B$ is defined as $\dim D(\vV)$. If $\Char F=2$, then $B$ is
an alternating bilinear form and its rank turns out to be even. The linear
mapping $D$ is bijective if, and only if, $\vV^\perp= \{\vo\}$. Under these
circumstances $B$ is said to be \emph{non{\trenn}degenerate}.
\par
The above notation ($Q$, $B$, $D$, $\perp$) will be maintained throughout. In
the presence of several quadratic forms, a common subscript or superscript will
be added to these symbols.
\par
Again, let a linear mapping $\eta\colon\tilde\vV \to \vV$ be given. Then the
\emph{pullback} of $Q$ along $\eta$, that is $Q\circ \eta$, is a quadratic
form, say $\tilde{Q}$, and $\tilde{B}(\tilde\vx,\tilde\vy) =
B\bigl(\eta(\tilde\vx),\eta(\tilde\vy)\bigr)$ for all
$\tilde\vx,\tilde\vy\in\tilde\vV$. The left hand side of the last equation can
be rewritten as $\bigl\li \tilde{D}(\tilde\vx),\tilde\vy\bigr\re$; the right
hand side equals $\bigl\li(D\circ\eta)(\tilde\vx), \eta(\tilde\vy)\bigr\re =
\bigl\li(\eta^\Trans \circ D\circ\eta)(\tilde\vx), \tilde\vy\bigr\re$. Hence
\begin{equation}\label{eq:D-tilde}
    \tilde{D} =\eta^\Trans\circ D\circ \eta .
\end{equation}
\par
A mapping $\phi\in\GL(\vV)$ is called an \emph{isometry} of $\VQ$ if
$Q=Q\circ\phi$. All isometries of $\VQ$ make up the \emph{orthogonal group}
$\Orth\VQ$. The formula
\begin{equation}\label{eq:D-phi}
    (\phi^\Trans)^{-1}\circ D = D\circ \phi
    \mbox{~~for all~~} \phi\in\Orth\VQ
\end{equation}
follows by replacing $\eta$ with $\phi$ in \eqref{eq:D-tilde} and by taking
into account $\tilde{D}=D$. The \emph{weak orthogonal group} $\Oweak\VQ$
consists of all isometries of $\VQ$ that fix the radical $\vV^\perp$
elementwise. The group $\Oweak\VQ$ appears in the literature under various
names; our terminology and notation follows \cite{elle-77a}. If $\vr\in\vV$
satisfies $Q(\vr)\neq 0$, then the \emph{$Q${\trenn}reflection} in the
direction of $\vr$, that is the mapping
\begin{equation}\label{eq:xi}
    \xi_\vr\colon\vV\to\vV\colon\vx\mapsto \vx - Q(\vr)^{-1} B(\vr,\vx)\vr ,
\end{equation}
belongs to $\Oweak\VQ$. If, moreover, $\vr\in \vV^\perp$, which implies $\Char
F=2$, then $D(\vr)=\vo^*$ and so $\xi_\vr=\id_\vV$. Otherwise, $\xi_\vr$ is of
order two. Each $\phi\in\Oweak\VQ$ is a product of $Q${\trenn}reflections,
unless $F$ and $\VQ$ satisfy one of the conditions \eqref{eq:Ow-ausn1} or
\eqref{eq:Ow-ausn2} for some basis $\{\ve_1,\ve_2,\ldots,\ve_n\}$ of $\vV$ and
all $\vx=\sum_{h=1}^{n}x_h\ve_h$ with $x_h\in F$:
\begin{align}
    |F|=2,\; \dim\vV > 2       &\mbox{~~and~~} Q(\vx) = x_1x_2 ;        \label{eq:Ow-ausn1}\\
    |F|=2,\; \dim\vV\geq 4     &\mbox{~~and~~} Q(\vx) = x_1x_2+x_3x_4 ; \label{eq:Ow-ausn2}
\end{align}
see \cite[Sect.~2]{havl-21b} for numerous references.\footnote{The conditions
on ``$\dim\vV$'' as in \eqref{eq:Ow-ausn1} and \eqref{eq:Ow-ausn2} have been
written down incorrectly in \cite{havl-21b}.}

\section{Lemmata on transvections and dilatations}\label{se:transvections}

Let $\VQ$ be a metric vector space. According to \eqref{eq:Delta}, any
$\vf\in\vV\setminus\{\vo\}$ gives rise to the group $\Delta(\vV,\vf)$, which in
turn determines a subgroup of the orthogonal group $\Orth\VQ$ and a subgroup of
the weak orthogonal group $\Oweak\VQ$, namely
\begin{multline}
\label{eq:Delta-O}
        \hspace{0.6cm}\DOrth(\vV,Q,\vf)  := \Delta(\vV,\vf)\cap\Orth\VQ \mbox{~~and~~}\\
        \DOweak(\vV,Q,\vf) := \Delta(\vV,\vf)\cap\Oweak\VQ .\hspace{0.6cm}
\end{multline}
Furthermore, if $Q(\vf)\neq 0$, then $\vf$ yields the $Q${\trenn}reflection
$\xi_\vf\in\Oweak\VQ$; see \eqref{eq:xi}. We proceed with an explicit
description of the groups appearing in \eqref{eq:Delta-O}.

\begin{lem}\label{lem:Delta-O}
Let $\vf$ be a non-zero vector of a metric vector space $\VQ$.
\begin{enumerate}\itemsep0pt\parsep0pt
\item\label{lem:Delta-O.a} If $\vf\notin\vV^\perp$ and $Q(\vf)\neq 0$, then
    $\DOrth(\vV,Q,\vf) = \DOweak(\vV,Q,\vf) =\{\id_\vV,\xi_\vf\}$ and
    $\xi_\vf\neq\id_\vV$.
\item\label{lem:Delta-O.b} If $\vf\notin\vV^\perp$ and $Q(\vf)=0$, then
    $\DOrth(\vV,Q,\vf) = \DOweak(\vV,Q,\vf) = \{\id_\vV\}$.
\item\label{lem:Delta-O.c} If $\vf\in\vV^\perp$ and $Q(\vf)\neq0$, then
    $\DOrth(\vV,Q,\vf)= \DOweak(\vV,Q,\vf) = \{\id_\vV\} $.
\item\label{lem:Delta-O.d} If $\vf\in\vV^\perp$ and $Q(\vf)=0$, then
    \begin{equation}
        \label{eq:Delta-O.d} \DOrth(\vV,Q,\vf) =\Delta(\vV,\vf)
        \mbox{~~and~~}\\
        \DOweak(\vV,Q,\vf) = \bigl\{\delta_{\va^*,\,\vf}\mid \va^*\in
        (\vV^\perp)^\circ \bigr\} .
    \end{equation}
Furthermore, by putting $n := \dim\vV$ and $k := \dim\vV^{\perp}$, it
follows
\begin{equation}\label{eq:Delta-O-ord.d}
    \bigl|\DOrth(\vV,Q,\vf)\bigr| = |F^\times| \cdot |F|^{n-1}
    \mbox{~~and~~}
    \bigl|\DOweak(\vV,Q,\vf)\bigr| = |F|^{n-k} .
\end{equation}
\end{enumerate}
\end{lem}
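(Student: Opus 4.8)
The plan is to reduce all four cases to a single computation telling us exactly when $\delta_{\va^*,\,\vf}$ is an isometry, and then to read off each assertion. First I would fix $\va^*\in\vV^*$ with $\li\va^*,\vf\re\neq -1$ and compute, for arbitrary $\vx\in\vV$, using $Q(\vx+t\vf)=Q(\vx)+tB(\vx,\vf)+t^2Q(\vf)$ with $t=\li\va^*,\vx\re$ together with $B(\vx,\vf)=\li D(\vf),\vx\re$ from \eqref{eq:D},
\[
    Q\bigl(\delta_{\va^*,\,\vf}(\vx)\bigr)-Q(\vx)
    =\li\va^*,\vx\re\,B(\vx,\vf)+\li\va^*,\vx\re^{2}Q(\vf)
    =\li\va^*,\vx\re\,\bigl\li D(\vf)+Q(\vf)\va^*,\,\vx\bigr\re .
\]
Thus $\delta_{\va^*,\,\vf}\in\Orth\VQ$ if and only if the product of the two linear forms $\va^*$ and $D(\vf)+Q(\vf)\va^*$ vanishes identically on $\vV$.

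The key auxiliary fact I would isolate is this: if $\ell,m\in\vV^*$ satisfy $\li\ell,\vx\re\li m,\vx\re=0$ for all $\vx\in\vV$, then $\ell=\vo^*$ or $m=\vo^*$, and this holds over \emph{every} field. This is the only genuinely delicate point, because it must survive over very small finite fields, where vanishing as a function need not coincide with vanishing as a polynomial; for a product of \emph{linear} forms, however, the two notions do agree. Indeed, assume both forms are non-zero. If they are proportional, then a non-zero scalar multiple of $\li\ell,\vx\re^{2}$ would vanish for every $\vx$, which fails at any $\vx$ with $\li\ell,\vx\re\neq 0$; if they are linearly independent, the map $\vx\mapsto(\li\ell,\vx\re,\li m,\vx\re)$ is onto $F^{2}$, so some $\vx$ makes both values equal to $1$ and the product equals $1\neq 0$.

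With this dichotomy the four cases fall out, using that $\vf\in\vV^\perp$ is equivalent to $D(\vf)=\vo^*$. In case~(b) the second form is the non-zero functional $D(\vf)$, forcing $\va^*=\vo^*$; in case~(c) the condition collapses to $Q(\vf)\li\va^*,\vx\re^{2}\equiv 0$, again forcing $\va^*=\vo^*$; either way only $\id_\vV$ survives, so both $\Delta O$-groups are $\{\id_\vV\}$. In case~(a) the product vanishes precisely when $\va^*=\vo^*$ (giving $\id_\vV$) or $D(\vf)+Q(\vf)\va^*=\vo^*$, that is $\va^*=-Q(\vf)^{-1}D(\vf)$, which is exactly $\xi_\vf$; these two functionals are distinct because $D(\vf)\neq\vo^*$, and since $\xi_\vf\in\Oweak\VQ$ both $\Delta O$-groups equal $\{\id_\vV,\xi_\vf\}$. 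In case~(d) the product is identically zero for every admissible $\va^*$, whence $\DOrth(\vV,Q,\vf)=\Delta(\vV,\vf)$; for the weak group I would additionally demand that $\vV^\perp$ be fixed pointwise, i.e.\ $\li\va^*,\vr\re\vf=\vo$ for all $\vr\in\vV^\perp$, which (as $\vf\neq\vo$) is exactly $\va^*\in(\vV^\perp)^\circ$, and such $\va^*$ automatically meet $\li\va^*,\vf\re=0\neq-1$ since $\vf\in\vV^\perp$.

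Finally, for the cardinalities in \eqref{eq:Delta-O-ord.d} I would invoke the bijection \eqref{eq:a-stern-bij}. The linear form $\va^*\mapsto\li\va^*,\vf\re$ is onto $F$, so its fibre over $-1$ has $|F|^{n-1}$ elements and its complement has $|F|^{n}-|F|^{n-1}=|F^\times|\cdot|F|^{n-1}$ elements, which is $\bigl|\DOrth(\vV,Q,\vf)\bigr|$. For the weak group, $\bigl|\DOweak(\vV,Q,\vf)\bigr|=\bigl|(\vV^\perp)^\circ\bigr|=|F|^{\,n-k}$, because $\dim(\vV^\perp)^\circ=\dim\vV-\dim\vV^\perp=n-k$.
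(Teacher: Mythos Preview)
Your proof is correct and follows essentially the same route as the paper: both start from the identity $Q\bigl(\delta_{\va^*,\vf}(\vx)\bigr)-Q(\vx)=\li\va^*,\vx\re\, B(\vx,\vf)+\li\va^*,\vx\re^{2} Q(\vf)$, identify $\xi_\vf=\delta_{-Q(\vf)^{-1}D(\vf),\,\vf}$, and then treat the four cases, with the cardinalities handled via the bijection \eqref{eq:a-stern-bij}. Your factorisation of this expression as the product of the linear forms $\va^*$ and $D(\vf)+Q(\vf)\va^*$, together with the explicit verification (valid even when $|F|=2$) that a product of linear forms vanishes identically only when one factor does, makes rigorous the step that the paper summarises as ``follow easily''.
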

\begin{proof}
We pick any $\va^*\in\vV^*$ subject to $\li\va^*,\vf\re\neq -1$. Then
$\delta_{\va^*,\,\vf}\in\Orth\VQ$ is equivalent to
\begin{equation}\label{eq:astern-O}
    Q\bigl(\delta_{\va^*,\,\vf}(\vx)\bigr) - Q(\vx)
    = \li\va^*,\vx\re B(\vx,\vf) + \li\va^*,\vx\re^2 Q(\vf) = 0
    \mbox{~~for all~~} \vx\in\vV .
\end{equation}
Also, $\delta_{\va^*,\,\vf}\in\Oweak\VQ$ is satisfied if, and only if,
\eqref{eq:astern-O} holds alongside with
\begin{equation}\label{eq:astern-Ow}
    \va^*\in(\vV^\perp)^\circ .
\end{equation}
If $Q(\vf)\neq 0$, then \eqref{eq:delta}, \eqref{eq:D} and \eqref{eq:xi} show
\begin{equation}\label{eq:xi=delta}
    \xi_\vf = \delta_{\vc^*,\,\vf}, \mbox{~~where~~} \vc^* := -Q(\vf)^{-1} D(\vf) .
\end{equation}
The claims in \eqref{lem:Delta-O.a}--\eqref{lem:Delta-O.c} and
\eqref{eq:Delta-O.d} now follow easily from \eqref{eq:astern-O},
\eqref{eq:astern-Ow}, \eqref{eq:xi=delta}, $\vf\in\vV^\perp$ being equivalent
to $D(\vf)=\vo^*$ and $\id_\vV=\delta_{\vo^*,\,\vf}$. By virtue of the
bijection \eqref{eq:a-stern-bij}, the equations in \eqref{eq:Delta-O-ord.d} are
immediate from \eqref{eq:Delta}, $\dim (\vV^\perp)^\circ = n-k$ and
\eqref{eq:Delta-O.d}.
\end{proof}

Next, we present a crucial result about a particular subgroup of the group
$\Delta(\vV,\vf)$; compare Corollary~\ref{cor:trans}.

\begin{lem}\label{lem:trans}
Let $\vf$ be a non-zero vector of a metric vector space $\VQ$. Then the
following are equivalent.
\begin{enumerate}\itemsep0pt\parsep0pt
\item\label{lem:trans.a} The group $\bigl\{\delta_{\va^*,\,\vf}\mid
    \va^*\in \{\vf\}^\circ \bigr\}$ is contained in $\Oweak\VQ$.
\item\label{lem:trans.b} One of the subsequent conditions holds:
    \begin{gather}
        Q(\vf)=0
        \mbox{~~and~~} \vV^\perp = F\vf; \label{eq:trans}\\
        \dim\vV = 1 ; \label{eq:trans=1}\\
        \dim\vV = 2 ,\; Q(\vf)\neq 0 ,\;  \dim\vV^\perp= 0 %
        \mbox{~~and~~} |F| = 2. \label{eq:trans=2}
    \end{gather}
\end{enumerate}
\end{lem}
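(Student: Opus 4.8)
The plan is to reduce condition~(\ref{lem:trans.a}) to the two explicit requirements already isolated in the proof of Lemma~\ref{lem:Delta-O}. Every $\va^*\in\{\vf\}^\circ$ satisfies $\li\va^*,\vf\re = 0\neq -1$ and produces a transvection, so (\ref{lem:trans.a}) says precisely that both \eqref{eq:astern-O} and \eqref{eq:astern-Ow} hold for all $\va^*\in\{\vf\}^\circ$. I would first dispose of the orthogonality condition \eqref{eq:astern-O}. Rewriting its left-hand side as $\li\va^*,\vx\re\bigl(\li D(\vf),\vx\re + Q(\vf)\li\va^*,\vx\re\bigr)$ exhibits it as the product of the two linear forms $\li\va^*,\cdot\re$ and $\li D(\vf)+Q(\vf)\va^*,\cdot\re$. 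A short lemma---valid over every field, including $\bF_2$---states that a product of two linear forms vanishes identically on a vector space of positive dimension only if one of the factors is the zero form. Granting this, \eqref{eq:astern-O} is equivalent to $\va^*=\vo^*$ or $D(\vf) = -Q(\vf)\va^*$. This reduction is the technical heart of the argument.

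I would then split according to whether $Q(\vf)$ vanishes. If $Q(\vf)\neq 0$, the equation $D(\vf)=-Q(\vf)\va^*$ has the single solution $\vc^*:=-Q(\vf)^{-1}D(\vf)$ from \eqref{eq:xi=delta}, so the orthogonality part of (\ref{lem:trans.a}) forces the subspace $\{\vf\}^\circ$ to be contained in the two-element set $\{\vo^*,\vc^*\}$. A subspace fits inside a two-element set only when it is trivial or is a one-dimensional space over $\bF_2$. The first alternative gives $\dim\vV=1$, that is \eqref{eq:trans=1}. In the second, $\dim\{\vf\}^\circ=1$ forces $\dim\vV=2$ and the cardinality forces $|F|=2$; here $\vc^*\in\{\vf\}^\circ$ holds automatically because $\li\vc^*,\vf\re=-Q(\vf)^{-1}B(\vf,\vf)=-2$, which is $0$ in characteristic~$2$, while $\vc^*\neq\vo^*$ amounts to $D(\vf)\neq\vo^*$, which for a $2$-dimensional space in characteristic~$2$ is equivalent to $\dim\vV^\perp=0$; this is \eqref{eq:trans=2}. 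The annihilator condition \eqref{eq:astern-Ow} is then vacuous since $(\vV^\perp)^\circ=\vV^*$.

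If $Q(\vf)=0$, the orthogonality condition becomes $\va^*=\vo^*$ or $D(\vf)=\vo^*$, i.e.\ $\vf\in\vV^\perp$. When $\vf\in\vV^\perp$, \eqref{eq:astern-O} holds for every $\va^*$, and (\ref{lem:trans.a}) reduces to the annihilator condition $\{\vf\}^\circ\subseteq(\vV^\perp)^\circ$. Passing to annihilators in $\vV$ and using the double-annihilator identity (recall that $(\{\vf\}^\circ)^\circ=F\vf$) turns this into $\vV^\perp\subseteq F\vf$; combined with $\vf\in\vV^\perp$ it yields $\vV^\perp=F\vf$, which is \eqref{eq:trans}. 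When $\vf\notin\vV^\perp$, orthogonality forces $\va^*=\vo^*$, hence $\{\vf\}^\circ=\{\vo^*\}$ and $\dim\vV=1$; but $\dim\vV=1$ with $Q(\vf)=0$ gives $\vf\in\vV^\perp$, a contradiction, so this branch contributes nothing.

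Assembling the branches shows that (\ref{lem:trans.a}) forces exactly one of \eqref{eq:trans}, \eqref{eq:trans=1}, \eqref{eq:trans=2}; the converse---that each of these conditions makes the full group lie in $\Oweak\VQ$---is read off directly from the same two characterising conditions (for \eqref{eq:trans=2} one uses that $\vc^*$ is then the unique non-zero element of $\{\vf\}^\circ$). I expect the main obstacle to be the bookkeeping around small fields: the product-of-linear-forms lemma must be argued as a statement about functions rather than polynomials so that it survives over $\bF_2$, and the passage from ``$\{\vf\}^\circ$ lies in a two-element set'' to the precise constraints $|F|=2$, $\dim\vV=2$ and $\dim\vV^\perp=0$ must be carried out carefully to match \eqref{eq:trans=2}.
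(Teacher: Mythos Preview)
Your argument is correct, and it reaches the same conclusion by a genuinely different route from the paper.

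The paper does not factor \eqref{eq:astern-O}. Instead, once $\dim\vV\geq 2$, it sets up the chain of inequalities
\[
    2\leq |F| \leq \bigl|\{\vf\}^\circ\bigr| \leq \bigl|\DOweak(\vV,Q,\vf)\bigr|
\]
and then reads off from the explicit orders in Lemma~\ref{lem:Delta-O} which of the four cases \eqref{lem:Delta-O.a}--\eqref{lem:Delta-O.d} can accommodate a group of that size. In Case~$Q(\vf)=0$ this forces \eqref{lem:Delta-O.d}, whence $\vf\in\vV^\perp$, and the equality $\vV^\perp=F\vf$ is then obtained exactly as you do, via $\{\vf\}^\circ\subseteq(\vV^\perp)^\circ$. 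In Case~$Q(\vf)\neq 0$ the chain forces \eqref{lem:Delta-O.a}, hence $\bigl|\DOweak(\vV,Q,\vf)\bigr|=2$, and the constraints $|F|=2$, $\dim\vV=2$, $\dim\vV^\perp=0$ drop out of the squeezed inequality together with the parity of the rank of an alternating form.

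Your approach trades this counting argument for the algebraic observation that \eqref{eq:astern-O} is the vanishing of the product $\li\va^*,\cdot\re\cdot\li D(\vf)+Q(\vf)\va^*,\cdot\re$, together with the elementary lemma that a product of two linear forms vanishes as a \emph{function} on $\vV$ only if one factor is identically zero. This makes the proof essentially self-contained: you do not need to invoke the full case split of Lemma~\ref{lem:Delta-O}, only the identification $\xi_\vf=\delta_{\vc^*,\vf}$ from \eqref{eq:xi=delta}. The paper's route, by contrast, is shorter on the page precisely because it outsources the work to the already-established Lemma~\ref{lem:Delta-O}. Your concern about the product-of-linear-forms lemma over $\bF_2$ is well placed but harmless: if neither factor is zero, pick $\vu$, $\vv$ with $\li\va^*,\vu\re\neq 0$ and $\li D(\vf)+Q(\vf)\va^*,\vv\re\neq 0$; then the product is non-zero at $\vu+\vv$.
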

\begin{proof}
\eqref{lem:trans.a}~$\Rightarrow$~\eqref{lem:trans.b} If $\dim\vV\leq 1$, then
\eqref{eq:trans=1} holds due to $\vf\neq\vo$. Otherwise, $\{\vf\}^\circ $ is at
least one{\trenn}dimensional and so $|F| \leq \bigl|\{\vf\}^\circ\bigr|$. The
bijection \eqref{eq:a-stern-bij} shows that the group
$\bigl\{\delta_{\va^*,\,\vf}\mid \va^*\in \{\vf\}^\circ \bigr\}$ is of order
$\bigl|\{\vf\}^\circ\bigr|$. By our assumption,
$\bigl\{\delta_{\va^*,\,\vf}\mid \va^*\in \{\vf\}^\circ \bigr\} \subseteq
\DOweak(\vV,Q,\vf)$; therefore
\begin{equation}\label{eq:ordnung}
    2\leq |F|
    \leq \bigl|\{\vf\}^\circ\bigr|
      =  \bigl|\bigl\{\delta_{\va^*,\,\vf}\mid \va^*\in \{\vf\}^\circ \bigl\}\bigr|
    \leq \bigl|{\DOweak(\vV,Q,\vf)}\bigr| .
\end{equation}
\par
\emph{Case} 1: $Q(\vf) = 0$. Then \eqref{eq:ordnung} implies that $\vf$ meets
the hypotheses of Lemma~\ref{lem:Delta-O}~\eqref{lem:Delta-O.d}. Hence
$\vf\in\vV^\perp$ and so $F\vf\leq\vV^\perp$. On the other hand, the second
equation in \eqref{eq:Delta-O.d} yields $\{\vf\}^\circ \leq (\vV^\perp)^\circ$.
This gives, by going over to annihilators on either side, $F\vf \geq
\vV^\perp$. All in all, \eqref{eq:trans} holds.
\par
\emph{Case} 2: $Q(\vf)\neq 0$. Now \eqref{eq:ordnung} implies that $\vf$ meets
the hypotheses of Lemma~\ref{lem:Delta-O}~\eqref{lem:Delta-O.a}. Hence
$\vf\notin\vV^\perp$. Furthermore, $\bigl|{\DOweak(\vV,Q,\vf)}\bigr| = 2$ and
so, together with \eqref{eq:ordnung}, we get $|F| = \bigl|\{\vf\}^\circ\bigr| =
2$. Consequently, $\dim {\{\vf\}^\circ} =1$, whence
$(F\vf)^\circ=\{\vf\}^\circ$ results in $\dim\vV = \dim (F\vf) + \dim
(F\vf)^\circ = 2$. From $\Char F =2$ and $\dim\vV=2$ being even, the radical
$\vV^\perp$ has even dimension $\leq 2$. Due to $\vf\notin\vV^\perp$, we cannot
have $\dim \vV^\perp=2$. Thus $\dim\vV^\perp=0$. To sum up, \eqref{eq:trans=2}
is satisfied.
\par
\eqref{lem:trans.b}~$\Rightarrow$~\eqref{lem:trans.a} First, suppose that
\eqref{eq:trans} holds. Then Lemma~\ref{lem:Delta-O}~\eqref{lem:Delta-O.d}
applies together with $(\vV^\perp)^\circ =\{\vf\}^\circ$. By the second
equation in \eqref{eq:Delta-O.d}, we have
$\bigl\{\delta_{\va^*,\,\vf}\mid\va^*\in\{\vf\}^\circ\bigr\} \subseteq
\Oweak\VQ$.
\par
Next, suppose that \eqref{eq:trans=1} holds. Here the group
$\bigl\{\delta_{\va^*,\,\vf} \mid \va^*\in \{\vf\}^\circ\bigr\}$ coincides with
$\{\id_\vV\}$ and so it is contained in $\Oweak\VQ$.
\par
Finally, suppose that \eqref{eq:trans=2} holds. Due to $|F|=2$, $Q(\vf)\neq 0$
actually means $Q(\vf)=-1=1$. From $\vV^\perp=\{\vo\}$ and $\Char F=2$, the
polar form $B$ of $Q$ is non{\trenn}degenerate and alternating. Since
$\dim\vV=2$ and $|F|=2$, the annihilator $\{\vf\}^\circ$ comprises only $D(\vf)
= -Q(\vf)^{-1} B(\vf,\cdot\,)$ and $\vo^*$. Thus, by $\delta_{\vo^*,\,\vf} =
\id_\vV$ and \eqref{eq:xi}, $\bigl\{\delta_{\va^*,\,\vf}\mid
\va^*\in\{\vf\}^\circ\bigr\} = \{\id_\vV,\xi_\vf\}\subseteq\Oweak\VQ$.
\end{proof}

The following lemma will take us to Corollary~\ref{cor:trans-skal}, which will
be used in the proof of Theorem~\ref{thm:proj}.

\begin{lem}\label{lem:trans-skal}
Let $\vf$ be a non-zero vector of a metric vector space $\VQ$. Then, for all
$s\in F\setminus\{0,1\}$ and all non-zero $\va^*\in\{\vf\}^\circ$, the mapping
$s\delta_{\va^*,\,\vf}\in\GL(\vV)$ does not belong to $\Oweak\VQ$.
\end{lem}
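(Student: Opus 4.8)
The plan is to argue by contradiction: suppose $s\delta_{\va^*,\vf}\in\Oweak\VQ$ and write $g:=s\delta_{\va^*,\vf}$, so that $g(\vx)=s\vx+s\li\va^*,\vx\re\vf$. Since a non-zero $\va^*$ lies in $\{\vf\}^\circ=(F\vf)^\circ$, we must have $\dim\vV\ge2$, and because $\li\va^*,\vf\re=0$ the transvection $\delta_{\va^*,\vf}$ fixes $\vf$ and fixes the hyperplane $H:=\ker\va^*\ni\vf$ pointwise. First I would exploit the defining feature of the \emph{weak} orthogonal group, namely that $g$ fixes the radical $\vV^\perp$ elementwise. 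If $\vf\in\vV^\perp$, then $g(\vf)=s\vf\ne\vf$ (as $s\ne1$, $\vf\ne\vo$), an immediate contradiction. If $\vf\notin\vV^\perp$, then for every $\vr\in\vV^\perp$ the fixing condition reads $(s-1)\vr+s\li\va^*,\vr\re\vf=\vo$; a hypothetical non-zero $\vr$ yields, in the sub-cases $\li\va^*,\vr\re=0$ and $\li\va^*,\vr\re\ne0$, respectively $\vr=\vo$ and $\vf\in F\vr\subseteq\vV^\perp$, both absurd. Hence $\vV^\perp=\{\vo\}$, so $B$ is non-degenerate and $\Oweak\VQ=\Orth\VQ$; the problem is thereby reduced to proving $g\notin\Orth\VQ$ in the non-degenerate case.

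Next I would write out the isometry condition $Q\circ g=Q$. Using $Q(s\,\cdot\,)=s^2Q(\,\cdot\,)$, it becomes
\[
(s^2-1)Q(\vx)+s^2\bigl(\li\va^*,\vx\re B(\vx,\vf)+\li\va^*,\vx\re^2Q(\vf)\bigr)=0\quad\text{for all }\vx\in\vV .
\]
Restricting this identity to $\vx\in H=\ker\va^*$ annihilates the $\va^*$-terms and leaves $(s^2-1)Q(\vx)=0$ on $H$, which splits the argument according to whether $s^2=1$.

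In the case $s^2\ne1$ I would deduce $Q|_H=0$, whence $B$ vanishes on $H\times H$ and, in particular, $Q(\vf)=0$. Expanding the displayed identity on $\vV=H\oplus F\vu$ for a vector $\vu$ with $\li\va^*,\vu\re=1$ then forces (after dividing out $s^2-1$) that $B(\vh,\vu)=0$ for all $\vh\in H$; combined with $B|_{H\times H}=0$ this gives $H\subseteq\vV^\perp=\{\vo\}$, contradicting $\dim H=\dim\vV-1\ge1$. In the case $s^2=1$ we necessarily have $s=-1$ and $\Char F\ne2$ (in characteristic $2$, $s^2=1$ means $s=1$). Here the identity collapses to $\li\va^*,\vx\re\bigl(\li\va^*,\vx\re Q(\vf)+B(\vx,\vf)\bigr)=0$ for all $\vx$; since the vectors outside $H$ span $\vV$, the linear form in the bracket vanishes identically, i.e.\ $D(\vf)=-Q(\vf)\va^*$. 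Pairing with $\vf$ and using $\li\va^*,\vf\re=0$ gives $2Q(\vf)=\li D(\vf),\vf\re=0$, so $Q(\vf)=0$, hence $D(\vf)=\vo^*$, i.e.\ $\vf\in\ker D=\vV^\perp=\{\vo\}$ — again a contradiction.

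The main obstacle is the special behaviour of the scalar $s=-1$: it escapes the leading-order ($s^2\ne1$) argument, must be handled separately, and can occur only outside characteristic $2$. A second delicate point is that $Q|_H=0$ does \emph{not} by itself contradict non-degeneracy when $\dim\vV=2$ (an isotropic line in a hyperbolic plane), so the finer relation $B(\vh,\vu)=0$ extracted from the full isometry identity is genuinely needed to close the case $s^2\ne1$. The reduction in the first step — that the weak condition alone compels $B$ to be non-degenerate — is what renders both cases tractable.
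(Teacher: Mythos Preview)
Your proof is correct. Both you and the paper begin with the same reduction---forcing $\vV^\perp=\{\vo\}$ from the weak-isometry condition---though the paper does this via an eigenvalue argument (the only eigenvalue of $s\delta_{\va^*,\vf}$ is $s\neq1$, so no nonzero vector is fixed), while you compute the fixing condition on $\vr\in\vV^\perp$ directly. After that the two arguments diverge. The paper proceeds geometrically: it observes that the hyperplane $\{\vf\}^\perp$ is $g$-invariant and that every $g$-invariant hyperplane contains $\vf$, hence $B(\vf,\vf)=0$; it then splits on whether $Q(\vf)\neq0$ (which forces $\Char F=2$ and then $s^2=1$, so $s=1$) or $Q(\vf)=0$ (where an explicit isotropic witness $\vv=\vu-Q(\vu)\vf$ with $B(\vf,\vu)=1$ is mapped to a non-isotropic vector). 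Your route is analytic: you expand the isometry identity, restrict to $H=\ker\va^*$, and split on $s^2=1$ versus $s^2\neq1$. The paper's argument is more conceptual and yields a single concrete witness vector in its Case~2; your approach is more elementary, avoiding the invariant-hyperplane observation, but requires the polynomial-in-$t$ coefficient extraction (which is legitimate since $|F|\geq3$ follows from the existence of $s\in F\setminus\{0,1\}$) and a separate treatment of $s=-1$. Both cleanly reach a contradiction with $\vf\neq\vo$ via $\vf\in\vV^\perp$ or $H\subseteq\vV^\perp$.
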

\begin{proof}
Suppose, by way of contradiction, that $s\delta_{\va^*,\,\vf}\in\Oweak\VQ$ with
$s$ and $\va^*$ as above. So, $\dim\vV\geq 2$ and $|F|>2$. The only eigenvalue
of the transvection $\delta_{\va^*,\,\vf}$ is $1\in F$ and the corresponding
eigenspace equals the hyperplane $\ker\va^*$ of $\vV$. Consequently, $s\neq
0,1$ is the only eigenvalue of $s\delta_{\va^*,\,\vf}$ and so the radical
$\vV^\perp$, which is fixed elementwise under $s\delta_{\va^*,\,\vf}$, turns
out to be $\{\vo\}$. From $s\delta_{\va^*,\,\vf}(\vf)=s\vf$ and
$s\delta_{\va^*,\,\vf}\in\Oweak\VQ$, the hyperplane $\{\vf\}^\perp$ coincides
with $s\delta_{\va^*,\,\vf}\bigl(\{\vf\}^\perp\bigr)$. A hyperplane of $\vV$ is
fixed (as a set) under $s\delta_{\va^*,\,\vf}$ if, and only if, it contains
$\vf$. Therefore $\vf\in\{\vf\}^\perp$ or, said differently, $B(\vf,\vf)=0$.
\par
\emph{Case} 1: $Q(\vf) \neq 0$. Then $0=B(\vf,\vf)=2 Q(\vf)$ forces $\Char F =
2$. Now $Q \bigl(s\delta_{\va^*,\,\vf}(\vf)\bigr) = s^2 Q(\vf) = Q(\vf)$ shows
$s^2=1$. This implies $s=1$, an absurdity.
\par
\emph{Case} 2: $Q(\vf) = 0$. As $\vf\notin\vV^\perp=\{\vo\}$ and
$\va^*\neq\vo^*$, there exists a vector $\vu\in\vV$ with $B(\vf,\vu)=1$ and
$\li \va^*,\vu\re\neq 0$. We put $\vv:=\vu-Q(\vu)\vf$, so that
$\delta_{\va^*,\,\vf}(\vv) = \vu + \bigl(\li\va^*,\vu\re - Q(\vu)\bigr)\vf$.
Then, by straightforward calculations, $Q(\vv)=0$ and
$Q\bigl(s\delta_{\va^*,\,\vf}(\vv)\bigr) = s^2 \li\va^*,\vu\re\neq 0$. This
contradicts $s\delta_{\va^*,\,\vf}$ being an isometry.
\end{proof}

Our final lemma relies on a result by E.~M.~Schr\"{o}der
\cite[(1.25)~Satz]{schroe-86a}. It will be an essential tool for proving
Theorem~\ref{thm:umkehr}.

\begin{lem}\label{lem:Orth1=2}
Let $(\vV,Q_1)$ be a metric vector space such that $\vV^{\perp_1}=\{\vo\}$.
Furthermore, suppose that none of the subsequent conditions applies:
\begin{align}
    \dim\vV = 1 \mbox{~~and~~}|F|=3 \label{eq:Orth1=2-ausn1}; \\
    \dim\vV = 2 \mbox{~~and~~}|F|=2 \label{eq:Orth1=2-ausn2}.
\end{align}
If a quadratic form $Q_2\colon\vV\to F$ satisfies
\begin{equation}\label{eq:Orth1=2}
    \Orth(\vV,Q_1)=\Orth(\vV,Q_2)
    \mbox{~~or~~}
    \Orth(\vV,Q_1)=\Oweak(\vV,Q_2) ,
\end{equation}
then $Q_1=cQ_2$ for some $c\in F^\times$.
\end{lem}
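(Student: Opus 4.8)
The plan is to deduce the assertion from Schr\"oder's \cite[(1.25)~Satz]{schroe-86a}, which I read as settling the situation of two metric vector spaces on the same $\vV$ that share their orthogonal group: if $\Orth(\vV,Q_1)=\Orth(\vV,Q_2)$ and $\vV^{\perp_1}=\{\vo\}$, then $Q_1=cQ_2$ for some $c\in F^\times$, provided one stays clear of certain ``very small'' configurations. Under the first alternative of \eqref{eq:Orth1=2} this applies directly, so the whole remaining task is to bring the second alternative, $\Orth(\vV,Q_1)=\Oweak(\vV,Q_2)$, into the same shape. Note that I do not assume $Q_2$ non{\trenn}degenerate a priori in the first alternative, since the two listed exceptions in fact arise from degenerate (or zero) companion forms $Q_2$, so the external result must already absorb them.

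To reduce the second alternative, I would show that $\vV^{\perp_2}=\{\vo\}$, for then $\Oweak(\vV,Q_2)=\Orth(\vV,Q_2)$ and the hypothesis collapses onto the first alternative. Since $\Oweak(\vV,Q_2)$ fixes $\vV^{\perp_2}$ elementwise, every $\vz\in\vV^{\perp_2}$ is a common fixed vector of the whole group $\Orth(\vV,Q_1)$, hence in particular is fixed by each $Q_1${\trenn}reflection $\xi_\vr$ with $Q_1(\vr)\neq0$. Testing $\vz$ against these reflections via \eqref{eq:xi} turns $\xi_\vr(\vz)=\vz$ into $B_1(\vr,\vz)=0$, so $\vz$ is $B_1${\trenn}orthogonal to every $Q_1${\trenn}anisotropic vector. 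The key point is then that, away from the excluded cases, the $Q_1${\trenn}anisotropic vectors span $\vV$; granting this, $\vz\in\vV^{\perp_1}=\{\vo\}$, whence $\vV^{\perp_2}=\{\vo\}$ and we are back in the first alternative.

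I expect this spanning property to be the main obstacle, and it is precisely where the exceptions enter. It can fail in tiny cases: for the binary hyperbolic form over $\bF_2$ there is a single anisotropic vector, which spans only a line, and this is exactly the configuration ruled out by \eqref{eq:Orth1=2-ausn2}. The case \eqref{eq:Orth1=2-ausn1} is likewise indispensable, but already for the first alternative, because over $\bF_3$ in dimension one the group $\Orth(\vV,Q_1)$ equals $\GL(\vV)=\Orth(\vV,0)$ although $Q_1$ is not proportional to the zero form. Carrying out the spanning step therefore calls for a careful case distinction in characteristic $2$ and over the small fields $\bF_2,\bF_3$, after which it only remains to confirm that the exceptional configurations barred by \cite{schroe-86a} are covered by \eqref{eq:Orth1=2-ausn1} and \eqref{eq:Orth1=2-ausn2}, so that no further cases must be set aside.
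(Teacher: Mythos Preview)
Your plan hinges on a reading of Schr\"oder's \cite[(1.25)~Satz]{schroe-86a} that is stronger than the one the paper uses. The paper invokes that theorem only under the hypothesis that the \emph{sets of $Q_1$- and $Q_2$-reflections coincide} (together with $\{\vo\}=\vV^{\perp_1}\neq\vV$), not merely that the orthogonal groups agree. Accordingly, almost the entire proof in the paper is spent upgrading \eqref{eq:Orth1=2} to this stronger statement: for each $\vf\neq\vo$ one compares the groups $\DOrth(\vV,Q_i,\vf)$ and $\DOweak(\vV,Q_i,\vf)$ from \eqref{eq:Delta-O} and runs through the four cases of Lemma~\ref{lem:Delta-O}, using the cardinality information there (and the exclusion of \eqref{eq:Orth1=2-ausn1}, \eqref{eq:Orth1=2-ausn2}) to conclude that $Q_1(\vf)\neq 0$ if and only if $Q_2(\vf)\neq 0$, and that in this case the $Q_1$-reflection and the $Q_2$-reflection in the direction of $\vf$ are the same element of $\Delta(\vV,\vf)$. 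Only after this is Schr\"oder's result applied.

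If Schr\"oder's hypothesis is indeed ``equal reflection sets'' rather than ``equal orthogonal groups'', then your first alternative is not yet complete, and your reduction of the second alternative to the first (via $\vV^{\perp_2}=\{\vo\}$) still lands you only at $\Orth(\vV,Q_1)=\Orth(\vV,Q_2)$, which is short of what is needed. Your anisotropic-spanning argument is correct and does give $\vV^{\perp_2}=\{\vo\}$; in fact the paper uses a kindred idea in its Case~2 (for $\vf$ with $Q_1(\vf)=0$ it produces an anisotropic $\vv$ with $B_1(\vf,\vv)\neq 0$ and uses that the $Q_1$-reflection $\xi_\vv$, already identified in Case~1 as a $Q_2$-reflection, moves $\vf$, whence $\vf\notin\vV^{\perp_2}$). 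But the paper pushes further to match the reflections themselves, which is the step your outline is missing. You should either verify that Schr\"oder's theorem really is stated for equal orthogonal groups, or---more in line with the paper---insert the argument that pins down the reflections before invoking it.
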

\begin{proof}
Our first goal is to establish that, whenever $\dim\vV\geq 1$, any
$Q_1${\trenn}reflection is also a $Q_2${\trenn}reflection and vice versa. To
this end, let us pick any vector $\vf\in\vV\setminus\{\vo\}$, whence $\vf\notin
\vV^{\perp_1}$. Also, for $i\in\{1,2\}$, we put (within this proof only)
\begin{equation*}
    d_i(\vf):=\bigl|\DOrth(\vV,Q_i,\vf)\bigr|
    \mbox{~~and~~}
    d_i'(\vf):=\bigl|\DOweak(\vV,Q_i,\vf)\bigr| ;
\end{equation*}
compare \eqref{eq:Delta-O}.
\par
\emph{Case} 1: $Q_1(\vf)\neq 0$. Then $\vf$ and $Q_1$ satisfy the hypotheses of
Lemma~\ref{lem:Delta-O}~\eqref{lem:Delta-O.a}, which gives $d_1(\vf)=2$. Hence
\eqref{eq:Orth1=2} implies
\begin{equation}\label{eq:d2(f)=2}
    d_2(\vf)=2 \mbox{~~or~~} d_2'(\vf)=2 .
\end{equation}
We use this intermediate result in order to find out which of the hypotheses
appearing in Lemma~\ref{lem:Delta-O}
\eqref{lem:Delta-O.a}--\eqref{lem:Delta-O.d} are met by $\vf$ and $Q_2$.
\par
Obviously, the hypotheses of \eqref{lem:Delta-O.b} do not hold, since under
these circumstances we would get $d_2(\vf) = d_2'(\vf) = 1$, a contradiction to
\eqref{eq:d2(f)=2}. Likewise, the hypotheses of \eqref{lem:Delta-O.c} cannot be
fulfilled. We claim that the hypotheses of \eqref{lem:Delta-O.d} are not
satisfied either. For a verification, we assume that the contrary holds. So
\eqref{eq:Delta-O-ord.d}, with $n := \dim\vV$ and $k := \dim\vV^{\perp_2}$,
gives $d_2(\vf) = |F^\times| \cdot |F|^{n-1}$ and $d_2'(\vf) = |F|^{n-k}$. If
$d_2(\vf)=2$, then either \eqref{eq:Orth1=2-ausn1} or \eqref{eq:Orth1=2-ausn2}
holds; both cases are contradictory, since they have been excluded. Thus
\eqref{eq:d2(f)=2} means $d_2'(\vf)=2$, whence $|F|=2$ and $n-k=1$. Then, due
to $\Char F=2$, the polar form of $Q_2$ is alternating and so its rank $n-k=1$
turns out to be even, which is also contradictory.
\par
By the above, $\vf$ and $Q_2$ are compelled to satisfy the hypotheses of
Lemma~\ref{lem:Delta-O}~\eqref{lem:Delta-O.a}, that is $\vf\notin\vV^{\perp_2}$
and $Q_2(\vf)\neq 0$. Consequently, $d_2(\vf)=d_2'(\vf)=2$ and so, by
\eqref{eq:Orth1=2} and Lemma~\ref{lem:Delta-O}~\eqref{lem:Delta-O.a}, the
$Q_1${\trenn}reflection in the direction of $\vf$ coincides with the
$Q_2${\trenn}reflection in the direction of $\vf$.
\par
\emph{Case} 2: $Q_1(\vf)=0$. So there is no $Q_1${\trenn}reflection in the
direction of $\vf$. Clearly, $\vf$ and $Q_1$ satisfy the hypotheses of
Lemma~\ref{lem:Delta-O}~\eqref{lem:Delta-O.b}, whence $d_1(\vf)=1$. Now
\eqref{eq:Orth1=2} implies
\begin{equation}\label{eq:d2(f)=1}
    d_2(\vf)=1 \mbox{~~or~~} d_2'(\vf)=1 .
\end{equation}
Since $\vf\notin\vV^{\perp_1}=\{\vo\}$, there is an auxiliary vector
$\vu\in\vV$ with $B_1(\vf,\vu)\neq 0$. Also, $Q_1(\vf)=0$ implies
$B_1(\vf,\vf)=0$. Therefore $B_1(\vf,\vf+\vu)=B_1(\vf,\vu)\neq 0$ and
$Q_1(\vf+\vu)=B_1(\vf,\vu)+Q_1(\vu)\neq Q_1(\vu)$. We put $\vv:=\vu$ if
$Q_1(\vu)\neq 0$ and $\vv:=\vf+\vu$ otherwise. Thus $Q_1(\vv)\neq 0$ and
$B_1(\vf,\vv)\neq 0$. Consequently, the $Q_1${\trenn}reflection in the
direction of $\vv$ does not fix $\vf$. From the previous case, this
$Q_1${\trenn}reflection is also a $Q_2${\trenn}reflection, which in turn
entails $\vf\notin\vV^{\perp_2}$. Therefore and by \eqref{eq:d2(f)=1}, $\vf$
and $Q_2$ satisfy the hypotheses of
Lemma~\ref{lem:Delta-O}~\eqref{lem:Delta-O.b}. Hence $Q_2(\vf)=0$ and a
$Q_2${\trenn}reflection in the direction of $\vf$ does not exist either.
\par
Finally, let us verify our assertion concerning $Q_1$ and $Q_2$: If
$\dim\vV=0$, then $Q_1=Q_2$ is the zero form and so $Q_1=cQ_2$ holds for
$c:=1$. Otherwise, by the above, the set of all $Q_1${\trenn}reflections
coincides with the set of all $Q_2${\trenn}reflections and, clearly,
$\{\vo\}=\vV^{\perp_1}\neq \vV$. Under these premises,
\cite[(1.25)~Satz]{schroe-86a} (see also \cite[(7.81)~Satz]{schroe-92a},
\cite[1.7.4]{schroe-95a}) establishes $Q_1=cQ_2$ for some $c\in F^\times$.
\end{proof}

\section{Affine metric spaces}\label{se:aff}

Throughout this section $\vV$ denotes a vector space. First, we collect some
well-known results about affine spaces and affine mappings. Our terminology is
close to the one in \cite[p.~33]{buek+c-95a} and \cite[Ch.~2,
Ch.~3]{gruen+w-77a}. For proofs we refer also to \cite[Ch.~II]{bour-98a},
\cite[Ch.~6]{havl-22b}, \cite[\S~5]{schroe-91b}, \cite[Ch.~2]{tayl-92a}, even
though the terminology from there may be different from ours.
\par
If $\vu\in\vV$ and $\vT\leq\vV$, then the coset $\vu+\vT$ will be addressed as
an \emph{affine subspace}\footnote{Some authors consider also the empty set as
being an affine subspace of $\vV$. We refrain from following this convention.}
of $\vV$. The \emph{affine space} $\bA(\vV)$ is understood to be the set
comprising all affine subspaces of $\vV$. The \emph{dimension} $\dim \bA(\vV)$
is defined as $\dim \vV$. The cosets of subspaces $\vT\leq\vV$ with dimension
$0$, $1$, $2$ and $\dim\vV-1$ are the \emph{affine points}, \emph{affine
lines}, \emph{affine planes} and \emph{affine hyperplanes} of $\vV$. Given any
$\vx\in\vV$ we shall usually write $\vx$ for the affine point $\vx+\{\vo\}$.
Also, we shall drop the adjective ``affine'' when speaking about points if no
confusion is to be expected. By analogy to the above, each affine subspace
$\vu+\vT$ of $\vV$ gives rise to the \emph{affine space} $\bA(\vu+\vT)$. It
comprises all affine subspaces of $\vV$ that are contained in $\vu+\vT$, and we
put $\dim\bA(\vu+\vT):=\dim \vT$.
\par
Let $\tilde{\vV}$ also be a vector space. We consider affine spaces
$\bA(\vu+\vT)$ and $\bA(\tilde\vu+\tilde\vT)$ with $\vu\in\vV$, $\vT\leq\vV$,
$\tilde\vu\in\tilde\vV$ and $\tilde\vT\leq\tilde\vV$. A mapping $\gamma\colon
\vu+\vT\to\tilde\vu+\tilde\vT$ is said to be \emph{affine} provided that it can
be written in the form
\begin{equation}\label{eq:gamma-allg}
   \gamma\colon \vu+\vT\to\tilde\vu+\tilde\vT\colon \vx \mapsto
   \gamma(\vw) + \gamma_{+}(\vx-\vw)
\end{equation}
for some point $\vw\in\vu+\vT$ and some linear mapping $\gamma_{+}\colon \vT
\to \tilde\vT$. An \emph{affinity} is understood to be a bijective affine
mapping.
\par
Let us briefly recall a few properties of the affine mapping $\gamma$ appearing
in \eqref{eq:gamma-allg}: Under $\gamma$, the affine space $\bA(\vu+\vT)$ is
mapped into the affine space $\bA(\tilde\vu+\tilde\vT)$. We have
$\gamma_{+}(\vx-\vy)=\gamma(\vx)-\gamma(\vy)$ for all $\vx,\vy\in\vu+\vT$, so
that $\gamma_{+}$ is uniquely determined by $\gamma$, whereas any point of
$\bA(\vu+\vT)$ may take over the role of $\vw$ in \eqref{eq:gamma-allg}. Also,
the affine mapping $\gamma$ is bijective if, and only if, $\gamma_{+}$ is a
linear bijection.
\par
The group of all affinities of $\vV$ onto itself is denoted by $\AGL(\vV)$ and
acts faithfully on $\bA(\vV)$. Any $\gamma\in\AGL(\vV)$ can be written in a
\emph{unique} way as
\begin{equation}\label{eq:gamma}
    \gamma \colon \vV\to\vV\colon \vx\mapsto \vt + \gamma_{+}(\vx)
    \mbox{~~with~~}\gamma_{+}\in\GL(\vV)
    \mbox{~~and~~} \vt\in\vV.
\end{equation}
Indeed, it suffices to rewrite \eqref{eq:gamma-allg} with $\vw:=\vo\in\vV$ and
$\vt:=\gamma(\vo)$. In particular, \eqref{eq:gamma} defines a
\emph{translation} if, and only if, $\gamma_{+} = \id_\vV$.
\par
In order to obtain a linear representation of the group $\AGL(\vV)$, we change
over from $\vV$ to the affine hyperplane $\{1\}\times\vV =
(1,\vo)+\{0\}\times\vV$ of the vector space $F\times\vV$. Thereby we make use
of the affinity
\begin{equation}\label{eq:epsilon}
    \eps\colon \vV\to \{1\}\times\vV \colon \vx \mapsto (1,\vo)+(0,\vx) = (1,\vx) .
\end{equation}
If $\gamma$ is given as in \eqref{eq:gamma}, then
$\eps\circ\gamma\circ\eps^{-1}$ is an affinity of $\{1\}\times\vV$.
Furthermore,
\begin{equation}\label{eq:gamma-zeta}
    \gamma^{\,\zeta}\colon \FV\to \FV\colon (x_0,\vx) \mapsto \bigl(x_0, x_0 \vt+\gamma_{+}(\vx)\bigr)
\end{equation}
is the \emph{only} linear mapping of $\FV$ to itself that extends
$\eps\circ\gamma\circ\eps^{-1}$. This $\gamma^{\,\zeta}$ is bijective. We
therefore obtain that
\begin{equation}\label{eq:zeta}
    \zeta\colon\AGL(\vV)\to\GL(\FV)
    \colon
    \gamma \mapsto \gamma^{\,\zeta}
\end{equation}
is a faithful linear representation of $\AGL(\vV)$. Its image will be written
as $\AGL(\vV)^{\,\zeta}$. The pairing
\begin{equation*}
    \lire\colon (\FV^*)\times (\FV) \to F \colon
    \bigl((a_0,\va^*),(x_0,\vx)\bigr)\mapsto a_0x_0 + \li \va^*,\vx \re
\end{equation*}
allows us to consider $\FV^*$ as being the dual vector space of $\FV$. There is
another faithful linear representation of $\AGL(\vV)$, which is known as the
\emph{dual} of \eqref{eq:zeta}; see \cite[p.~4]{fult+h-13a}. It reads
\begin{equation}\label{eq:beta}
    \beta\colon\AGL(\vV) \to \GL(\FV^*)
    \colon
    \gamma\mapsto \gamma^{\,\beta} := \bigl((\gamma^{\,\zeta})^\Trans\bigr)^{-1}
\end{equation}
and we denote its image by $\AGL(\vV)^{\,\beta}$. If $\gamma$ is given as in
\eqref{eq:gamma}, then
\begin{equation}
    \label{eq:gamma-beta}
    \gamma^{\,\beta}(a_0,\va^*)
    = \Bigl(a_0 - \bigl\li (\gamma_{+}^\Trans)^{-1}(\va^*),\vt\bigr\re,
    (\gamma_{+}^\Trans)^{-1}(\va^*)\Bigr) \\
    \mbox{~~for all~~}
    (a_0,\va^*)\in\FV^* .
\end{equation}
\begin{lem}\label{lem:AGL-rep}
$\AGL(\vV)^{\,\beta}$ is the elementwise stabiliser of $F(1,\vo^*)$ in
$\GL(\FV^*)$.
\end{lem}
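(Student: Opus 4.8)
The plan is to establish the claimed identity by proving the two inclusions. The guiding observation is that $F(1,\vo^*)$ is one{\trenn}dimensional, so a transformation in $\GL(\FV^*)$ lies in its elementwise stabiliser precisely when it fixes the single generator $(1,\vo^*)$, the scalar multiples being then fixed automatically by linearity. Moreover, under the identification of $\FV$ with its bidual, $(1,\vo^*)$ is exactly the linear form ``first coordinate'', because $\li(1,\vo^*),(x_0,\vx)\re = x_0$ for all $(x_0,\vx)\in\FV$. Both inclusions will be read off from this interpretation.

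For $\AGL(\vV)^{\,\beta}$ being contained in the stabiliser, I would simply evaluate \eqref{eq:gamma-beta} at $(a_0,\va^*)=(1,\vo^*)$. Since $(\gamma_{+}^\Trans)^{-1}$ is linear it fixes $\vo^*$, so the second component stays $\vo^*$ and the first becomes $1-\li\vo^*,\vt\re=1$. Thus $\gamma^{\,\beta}(1,\vo^*)=(1,\vo^*)$ for every $\gamma\in\AGL(\vV)$, and the inclusion follows.

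The reverse inclusion carries the actual content. Given $\psi\in\GL(\FV^*)$ with $\psi(1,\vo^*)=(1,\vo^*)$, I would produce a preimage under $\beta$. By \eqref{eq:beta}, the equation $\psi=\gamma^{\,\beta}$ is equivalent to $\gamma^{\,\zeta}=(\psi^\Trans)^{-1}$, so I set $\varphi:=(\psi^\Trans)^{-1}\in\GL(\FV)$ and aim to show that $\varphi$ has the shape \eqref{eq:gamma-zeta}. Using $(\eta^\Trans)^\Trans=\eta$ one obtains $\varphi^\Trans=\psi^{-1}$, hence $\varphi^\Trans(1,\vo^*)=(1,\vo^*)$; expanding this through the canonical pairing shows that $\varphi$ preserves the first coordinate, that is, the first component of $\varphi(x_0,\vx)$ equals $x_0$. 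Writing $\vt\in\vV$ for the second component of $\varphi(1,\vo)$ and $\gamma_{+}\colon\vV\to\vV$ for the linear map sending $\vx$ to the second component of $\varphi(0,\vx)$, one gets $\varphi(x_0,\vx)=(x_0,\,x_0\vt+\gamma_{+}(\vx))$, which is precisely the form \eqref{eq:gamma-zeta}. A short injectivity argument (the matrix of $\varphi$ is block{\trenn}triangular with identity in the top{\trenn}left corner) turns the bijectivity of $\varphi$ into $\gamma_{+}\in\GL(\vV)$. Then $\varphi=\gamma^{\,\zeta}$ for the affinity $\gamma\colon\vx\mapsto\vt+\gamma_{+}(\vx)$, and therefore $\psi=\gamma^{\,\beta}\in\AGL(\vV)^{\,\beta}$.

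The main obstacle is exactly this reverse inclusion: one must verify that fixing the single vector $(1,\vo^*)$ already forces the rigid ``affine'' block structure \eqref{eq:gamma-zeta}---trivial action on the first coordinate, a translation column $\vt$, and a linear part below---and that invertibility of $\psi$ propagates to invertibility of $\gamma_{+}$. The only point demanding care is the bookkeeping with transposes and the $\vV$--$\vV^{**}$ identification while passing between $\psi$ and $\varphi$.
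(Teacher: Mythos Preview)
Your proof is correct and follows essentially the same route as the paper: both directions are handled the same way, and for the reverse inclusion you and the paper alike pass from $\psi\in\GL(\FV^*)$ to $\varphi=(\psi^\Trans)^{-1}\in\GL(\FV)$ (called $\kappa$ in the paper), use that $\varphi^\Trans$ fixes $(1,\vo^*)$ to recover the block structure \eqref{eq:gamma-zeta}, and then read off $\gamma$. The only cosmetic difference is that you deduce ``$\varphi$ preserves the first coordinate'' globally and then specialise, whereas the paper checks $\varphi(1,\vo)\in\{1\}\times\vV$ and invariance of $\{0\}\times\vV=\ker(1,\vo^*)$ separately.
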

\begin{proof}
From \eqref{eq:gamma-beta}, any $\gamma^{\,\beta}\in\AGL(\vV)^{\,\beta}$ fixes
all linear forms in $F(1,\vo^*)$. Conversely, any mapping belonging to
$\GL(\FV^*)$ can be written as $(\kappa^\Trans)^{-1}$ with $\kappa\in\GL(\FV)$.
If $(\kappa^\Trans)^{-1}$ fixes $F(1,\vo^*)$ elementwise, then
\begin{equation*}
    \bigl\li(1,\vo^*),\kappa(1,\vo) \bigr\re
    = \bigl\li\kappa^\Trans (1,\vo^*),(1,\vo) \bigr\re
    = \bigl\li(1,\vo^*) ,(1,\vo) \bigr\re
    = 1
\end{equation*}
implies $\kappa(1,\vo)=(1,\vt)$ for some $\vt\in\vV$. From $\{0\}\times\vV
=\ker (1,\vo^*)$ being invariant under $\kappa$, there exists a
$\gamma_{+}\in\GL(\vV)$ such that $\kappa(0,\vx)=\bigl(0,\gamma_{+}(\vx)\bigr)$
for all $\vx\in\vV$. The affinity $\gamma$ arising from $\gamma_{+}$ and $\vt$
according to \eqref{eq:gamma} satisfies
$\gamma^{\,\beta}=(\kappa^\Trans)^{-1}$.
\end{proof}

\begin{rem}\label{rem:aff-f}
The vector space $\FV^*$ can be identified with the vector space consisting of
all affine functions $\vV\to F$ as follows: Any $(a_0,\va^*)\in \FV^*$ is taken
for the affine function $\vV\to F \colon \vx\mapsto a_0 + \li\va^*,\vx\re$. The
linear forms belonging to $F(1,\vo^*)$ turn into the constant functions $\vV\to
F$. Furthermore, the faithful linear representation
$\beta\colon\AGL(\vV)\to\GL(\FV^*)$ in \eqref{eq:beta} may be described in the
following way: For any $\gamma\in\AGL(\vV)$, the image of the affine function
$(a_0,\va^*)$ under $\gamma^{\,\beta}$ is given by the product function
$(a_0,\va^*)\circ\gamma^{-1}$; see \eqref{eq:gamma-beta}.
\end{rem}

The \emph{projective space} $\bP(\FV)$ is understood to be the set of all
subspaces of $\FV$. The (projective) \emph{dimension} of $\bP(\FV)$ is one less
than the dimension of $\FV$. We adopt the usual geometric terms: \emph{points},
\emph{lines}, \emph{planes} and \emph{hyperplanes} of $\bP(\FV)$ are the
subspaces of $\FV$ with (vector) dimension $1$, $2$, $3$ and $\dim(\FV)-1$,
respectively; see \cite[p.~30]{buek+c-95a}, \cite[Ch.~2, Ch.~3]{gruen+w-77a}.
Furthermore, we refer to \cite[Ch.~II \S~9]{bour-98a}, \cite[Ch.~6]{havl-22b},
\cite[\S~6]{schroe-91b}, \cite[Ch.~3]{tayl-92a}. The general linear group
$\GL(\FV)$ acts in a canonical way on $\bP(\FV)$: Any $\kappa\in\GL(\FV)$
determines a \emph{projective collineation} on $\bP(\FV)$, which is given as
$\vX\mapsto\kappa(\vX)$ for all $\vX\in\bP(\FV)$. This action of $\GL(\FV)$ has
the kernel $F^\times \id_{\FV}$.
\par
Using the affinity $\eps$ as in \eqref{eq:epsilon}, the \emph{embedding} of the
affine space $\bA(\vV)$ in the projective space $\bP(\FV)$ takes the form
\begin{equation*}
    \iota\colon \bA(\vV)\to\bP(\FV)\colon \vx+\vT \mapsto \spn\bigl(\eps(\vx+\vT)\bigr)
    =\spn\bigl(\{1\}\times(\vx+\vT)\bigr).
\end{equation*}
An element of $\bP(\FV)$ is said to be \emph{at infinity} if it is not in the
image of $\iota$. In particular, $\{0\}\times\vV$ is the only \emph{hyperplane
at infinity} of $\bP(\FV)$. The group $\AGL(\vV)^{\,\zeta}$ (see
\eqref{eq:zeta}) acts on $\bP(\FV)$ as a group of projective collineations,
which allows us to deal with affinities in projective terms. Furthermore, by
sending any subspace of $\FV$ to its annihilator, a bijection of $\bP(\FV)$
onto $\bP(\FV^*)$ is obtained, which reverses inclusions. For example, the
hyperplane at infinity corresponds to the point $F(1,\vo^*)$. The action of the
group $\AGL(\vV)^{\,\beta}$ (see \eqref{eq:beta}) on the point set of
$\bP(\FV^*)$ mirrors the action of $\AGL(\vV)$ on the set of affine hyperplanes
of $\vV$.
\par
A quadratic form $Q\colon\vV\to F$ makes $\bA(\vV)$ into an \emph{affine metric
space} $\bA\VQ$, which can be equipped with a wealth of additional structure
\cite[\S~9]{schroe-92a}, \cite[Sect.~3]{schroe-95a}. If an affinity
$\mu\in\AGL(\vV)$ is given by analogy to \eqref{eq:gamma}, but with
$\mu_{+}\in\Orth\VQ$ (resp.\ $\mu_{+}\in\Oweak\VQ$) and arbitrary $\vt\in\vV$,
then $\mu$ is called a \emph{motion} (resp.\ a \emph{weak motion}) of $\VQ$.
All such motions (resp.\ weak motions) comprise the \emph{motion group}
$\AOrth\VQ$ (resp.\ the \emph{weak motion group} $\AOweak\VQ$).\footnote{There
is no widely accepted terminology for these groups.}
\par
Given a point $\vp$ in $\bA(\vV)$ and a vector $\vr\in\vV$ such that
$Q(\vr)\neq 0$, the mapping
\begin{equation}\label{eq:aff-xi}
    \xi_{\vp,\vr}\colon\vV\to\vV\colon \vx\mapsto \vx - Q(\vr)^{-1} B(\vr,\vx-\vp)\vr
\end{equation}
is the \emph{affine $Q${\trenn}reflection} with axis $\vp+\{\vr\}^\perp$ in the
direction of $\vr$; see \cite[p.~99]{schroe-92a} or \cite[p.~976]{schroe-95a},
where the term \emph{affine $Q${\trenn}symmetry} is used instead. Then, with
the notation as in \eqref{eq:gamma}$, (\xi_{\vp,\vr})_{+}=\xi_\vr$ and $\vt =
Q(\vr)^{-1}B(\vr,\vp)\vr$. This establishes $\xi_{\vp,\vr}\in\AOweak\VQ$.
\par
From \eqref{eq:zeta} and \eqref{eq:beta}, we obtain---by restriction---faithful
linear representations of $\AOrth\VQ$ and $\AOweak\VQ$. Their images are
written as $\AOrth\VQ^{\,\zeta}$, $\AOrth\VQ^{\,\beta}$, $\AOweak\VQ^{\,\zeta}$
and $\AOweak\VQ^{\,\beta}$.

\section{Main results}\label{se:main}

Let $\vV$ be a vector space. Thus there is the faithful linear representation
$\beta\colon\AGL(\vV)\to\GL(\FV^*)$ as in \eqref{eq:beta}. Also, we recall our
standard notation $(Q,B,D,\perp)$ for dealing with quadratic forms; see
Section~\ref{se:prelim}. Our aim is to find \emph{all} dyads of metric vector
spaces $\VQ$ and $(\FV^*,\tilde{Q})$ such that the $\beta$-image of the motion
group $\AOrth\VQ$ or the $\beta$-image of the weak motion group $\AOweak\VQ$
coincides with the weak orthogonal group $\Oweak(\FV^*,\tilde{Q})$. That is, we
require that one of the following holds:
\begin{align}
      \AOrth\VQ^{\,\beta} =& \Oweak(\FV^*,\tilde{Q}) ; \label{eq:oder1} \\
     \AOweak\VQ^{\,\beta} =& \Oweak(\FV^*,\tilde{Q}) . \label{eq:oder2}
\end{align}
The next two propositions provide \emph{most, but not all} of the solutions of
the above problem. Therein, we make use of two auxiliary linear mappings
together with their transposes:
\begin{equation*}
\begin{aligned}
    \nu&\colon\vV\to\FV   \colon \vx\mapsto(0,\vx),
    &\nu^\Trans&\colon \FV^*\to\vV^*\colon (a_0,\va^*)\mapsto \va^* ,\\
    \pi&\colon \FV \to\vV \colon (x_0,\vx) \mapsto \vx ,
    &\pi^\Trans&\colon
    \vV^*\to \FV^*\colon  \va^*\mapsto (0,\va^*) .
\end{aligned}
\end{equation*}

\begin{prop}\label{prop:Q-auf-ab}
Let $\vV$ be a vector space.
\begin{enumerate}\itemsep0pt\parsep0pt
\item\label{prop:Q-auf-ab.a} If $Q\colon\vV\to F$ is a quadratic form with
    non{\trenn}degenerate polar form $B$, then the quadratic form
    \begin{equation}\label{eq:Q-auf}
        Q^\uparrow:=Q\circ D^{-1}\circ\nu^\Trans
        \colon \FV^* \to F\colon (a_0,\va^*)
        \mapsto (Q\circ D^{-1})(\va^*)
    \end{equation}
satisfies $Q^\uparrow(1,\vo^*)=0$, and the polar form ${B^\uparrow}$ of
$Q^\uparrow$ has $F(1,\vo^*)$ as its radical.
\item\label{prop:Q-auf-ab.b} If $\tilde{Q}\colon\FV^*\to F$ is a quadratic
    form with $\tilde{Q}(1,\vo^*)=0$ and with $F(1,\vo^*)$ being the
    radical of its polar form $\tilde{B}$, then the linear mapping
    $\pi\circ \tilde{D}\circ\pi^\Trans\colon \vV^*\to\vV$ is invertible,
    and the quadratic form
\begin{equation}\label{eq:Q-ab}
    \tilde{Q}^\downarrow
    := \tilde{Q}\circ\pi^\Trans\circ(\pi\circ\tilde{D}\circ\pi^\Trans)^{-1}
    \colon \vV\to F
    \colon \vx \mapsto \tilde{Q}\bigl(0, (\pi\circ \tilde{D}\circ\pi^\Trans)^{-1}(\vx)\bigr)
\end{equation}
has a non{\trenn}degenerate polar form $\tilde{B}^\downarrow$.
\item\label{prop:Q-auf-ab.c} If $Q$ is given as in \eqref{prop:Q-auf-ab.a},
    then $(Q^\uparrow)^\downarrow=Q$ and $cQ^\uparrow=(cQ)^\uparrow$ for
    all $c\in F^\times$.
\item\label{prop:Q-auf-ab.d} If $\tilde{Q}$ is given as in
    \eqref{prop:Q-auf-ab.b}, then
    $(\tilde{Q}^\downarrow)^\uparrow=\tilde{Q}$ and $c\tilde{Q}^\downarrow=
    (c\tilde{Q})^\downarrow$ for all $c\in F^\times$.
\end{enumerate}
\end{prop}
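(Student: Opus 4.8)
The plan is to derive all four parts from the same small toolkit: the pullback rule \eqref{eq:D-tilde} for induced maps, the self-adjointness \eqref{eq:D=DT} (here $D=D^\Trans$ and $\tilde D=\tilde D^\Trans$), the identification of the radical with a kernel \eqref{eq:rad=ker} together with its annihilator consequence \eqref{eq:D(V)}, and the two elementary ``adjunction'' identities $\pi\circ\nu=\id_\vV$ and $\nu^\Trans\circ\pi^\Trans=\id_{\vV^*}$ (the second is the transpose of the first). Everything then reduces to collapsing a composite of these maps.

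For \eqref{prop:Q-auf-ab.a} I would first note that $Q^\uparrow=Q\circ(D^{-1}\circ\nu^\Trans)$ is a quadratic form as a pullback, and that $Q^\uparrow(1,\vo^*)=Q(D^{-1}(\vo^*))=Q(\vo)=0$. The substance is the induced map of $B^\uparrow$: feeding $\eta:=D^{-1}\circ\nu^\Trans$ into \eqref{eq:D-tilde} and simplifying with $(\nu^\Trans)^\Trans=\nu$ and $(D^{-1})^\Trans=D^{-1}$ (the latter from \eqref{eq:D=DT}) gives $D^\uparrow=\nu\circ D^{-1}\circ\nu^\Trans$, that is $(a_0,\va^*)\mapsto(0,D^{-1}(\va^*))$. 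As $D^{-1}$ is injective, $\ker D^\uparrow=F(1,\vo^*)$, and by \eqref{eq:rad=ker} this kernel is the radical of $B^\uparrow$.

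For \eqref{prop:Q-auf-ab.b} the crux --- and the step I expect to be the main obstacle --- is the invertibility of $P:=\pi\circ\tilde D\circ\pi^\Trans$. I would read the radical hypothesis as $\ker\tilde D=F(1,\vo^*)$ and combine it with \eqref{eq:D(V)}, which gives $\tilde D(\FV^*)=\bigl(F(1,\vo^*)\bigr)^\circ=\{0\}\times\vV$. Since $\pi^\Trans$ embeds $\vV^*$ as $\{0\}\times\vV^*$, a subspace meeting $\ker\tilde D=F(1,\vo^*)$ only in $\vo$, the composite $\tilde D\circ\pi^\Trans$ is injective and lands in $\{0\}\times\vV$, on which $\pi$ is injective; hence $P$ is injective, so bijective. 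Non-degeneracy of $\tilde B^\downarrow$ is then a one-line consequence: with $\mu:=P^{-1}$ and $P^\Trans=P$ (from \eqref{eq:D=DT}), the pullback rule \eqref{eq:D-tilde} shows that $\tilde Q^\downarrow=\tilde Q\circ(\pi^\Trans\circ\mu)$ has induced map $\mu^\Trans\circ P\circ\mu=(P^\Trans)^{-1}=P^{-1}=\mu$, which is invertible.

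Finally, \eqref{prop:Q-auf-ab.c} and \eqref{prop:Q-auf-ab.d} are the two round trips. For $(Q^\uparrow)^\downarrow=Q$ I would reuse $D^\uparrow=\nu\circ D^{-1}\circ\nu^\Trans$ from part \eqref{prop:Q-auf-ab.a} and collapse $\pi\circ D^\uparrow\circ\pi^\Trans=D^{-1}$ using the adjunction identities, so that its inverse is $D$; substitution then gives $(Q^\uparrow)^\downarrow(\vx)=Q^\uparrow(0,D(\vx))=Q(D^{-1}(D(\vx)))=Q(\vx)$. For $(\tilde Q^\downarrow)^\uparrow=\tilde Q$ I would reuse the induced map $\mu$ of $\tilde Q^\downarrow$ from part \eqref{prop:Q-auf-ab.b}; since its inverse is $P$ and $\mu\circ P=\id_{\vV^*}$, the construction collapses to $(\tilde Q^\downarrow)^\uparrow=\tilde Q\circ\pi^\Trans\circ\nu^\Trans$, that is $(a_0,\va^*)\mapsto\tilde Q(0,\va^*)$. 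The one remaining point is that $\tilde Q(a_0,\va^*)=\tilde Q(0,\va^*)$ because $(a_0,\vo^*)=a_0(1,\vo^*)$ lies in the radical of $\tilde B$ and satisfies $\tilde Q(1,\vo^*)=0$; this is the sole place where the hypothesis $\tilde Q(1,\vo^*)=0$ enters. The scalar identities reduce to a single substitution once one notes that the polar form of $cQ$ is $cB$, hence $D_{cQ}=cD$; feeding this through $D_{cQ}^{-1}$ together with the homogeneity $Q(\lambda\vx)=\lambda^2Q(\vx)$ yields the stated scalar relations, and the same bookkeeping handles $(c\tilde Q)^\downarrow$.
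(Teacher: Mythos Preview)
Your proposal is correct and follows essentially the same route as the paper: the same pullback computation $D^\uparrow=\nu\circ D^{-1}\circ\nu^\Trans$ in \eqref{prop:Q-auf-ab.a}, the same identification $\tilde D^\downarrow=(\pi\circ\tilde D\circ\pi^\Trans)^{-1}$ in \eqref{prop:Q-auf-ab.b}, and the same collapse via $\pi\circ\nu=\id_\vV$, $\nu^\Trans\circ\pi^\Trans=\id_{\vV^*}$ together with $\tilde Q=\tilde Q\circ\pi^\Trans\circ\nu^\Trans$ for the two round trips. The only cosmetic difference is that for the invertibility of $P=\pi\circ\tilde D\circ\pi^\Trans$ the paper argues surjectivity (via $\tilde D(\{0\}\times\vV^*)=\{0\}\times\vV$) whereas you argue injectivity; both are equivalent in finite dimension.
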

\begin{proof}
\eqref{prop:Q-auf-ab.a} From \eqref{eq:rad=ker}, $\ker D=\vV^\perp=\{\vo\}$.
So, $D$ is bijective and $Q^\uparrow$ is well defined:
\begin{equation*}
    \begin{tikzcd}
       \vV   \arrow[r,"{Q}"] &
       F &
       \FV^* \arrow[l,"{Q^\uparrow}"']
             \arrow[ld,"{\nu^\Trans}"] \\
         &
       \vV^* \arrow[lu,"{D^{-1}}"]
    \end{tikzcd}
\end{equation*}
It is clear that $Q^\uparrow(1,\vo^*)=Q(\vo)=0$. Substituting $\eta :=
D^{-1}\circ\nu^\Trans$ and $\tilde{D} := {D^\uparrow}$ in \eqref{eq:D-tilde}
and then using $(D^{-1})^{\Trans}= (D^{\Trans})^{-1}= D^{-1}$, which follows
from \eqref{eq:D=DT}, gives
\begin{equation}\label{eq:Dauf}
    {D^\uparrow}=\nu\circ D^{-1}\circ\nu^\Trans \colon
    \FV^*\to \FV
    \colon
    (a_0,\va^*)\mapsto \bigl(0,D^{-1}(\va^*)\bigr)
\end{equation}
as the induced linear mapping of ${B^\uparrow}$. From \eqref{eq:Dauf}, $\ker
{D^\uparrow} = F(1,\vo^*)$. According to \eqref{eq:rad=ker}, the latter kernel
equals the radical of ${B^\uparrow}$.
\par
\eqref{prop:Q-auf-ab.b} By our assumption on the radical of $\tilde{B}$ and
from \eqref{eq:rad=ker}, $\ker \tilde{D} = F(1,\vo^*)$. Thus $\tilde{D}(\FV^*)
= \tilde{D}\Bigl(F(1,\vo^*)\oplus \bigl(\{0\}\times \vV^*\bigr)\Bigr) =
\tilde{D}\bigl(\{0\}\times \vV^*\bigr)$. On the other hand, from
\eqref{eq:D(V)}, $\tilde{D}(\FV^*) = \bigl( F(1,\vo^*)\bigr)^\circ =
\{0\}\times \vV$, whence $\tilde{D}\bigl(\{0\}\times \vV^*\bigr)= \{0\}\times
\vV$. Therefore $(\pi\circ\tilde{D}\circ\pi^\Trans)(\vV^*) = \vV$. Hence the
inverse mapping $(\pi\circ \tilde{D}\circ\pi^\Trans)^{-1}$ exists and
$\tilde{Q}^\downarrow$ is well defined:
\begin{equation*}
    \begin{tikzcd}
       \vV   \arrow[r,"{\tilde{Q}^\downarrow}"]
             \arrow[rd,"{(\pi\,\circ\,\tilde{D}\,\circ\,\pi^\Trans)^{-1}}"'] &
       F &
       \FV^* \arrow[l,"{\tilde{Q}}"']   \\
         &
       \vV^* \arrow[ru,"{\pi^\Trans}"']
    \end{tikzcd}
\end{equation*}
As $\tilde{D}$ coincides with its transpose, so does
$(\pi\circ\tilde{D}\circ\pi^\Trans)^{-1}$. By analogy with \eqref{eq:D-tilde},
\begin{equation}\label{eq:Dab}
    \tilde{D}^\downarrow
    =
    \bigl((\pi\circ \tilde{D}\circ\pi^\Trans)^{-1}
    \circ \pi\bigr)
    \circ\tilde{D}\circ
    \bigl(\pi^\Trans
    \circ (\pi\circ \tilde{D}\circ\pi^\Trans)^{-1}\bigr)
    =  (\pi\circ \tilde{D}\circ\pi^\Trans)^{-1} .
\end{equation}
The radical of $\tilde{B}^\downarrow$ equals $\ker\tilde{D}^\downarrow=\{\vo\}$
and so $\tilde{B}^\downarrow$ is non{\trenn}degenerate.
\par
\eqref{prop:Q-auf-ab.c} The first assertion follows from \eqref{eq:Q-auf},
\eqref{eq:Q-ab}, \eqref{eq:Dauf}, $\nu^\Trans\circ\pi^\Trans = \id_{\vV^*}$ and
$\pi\circ\nu=\id_\vV$:
\begin{equation*}
    (Q^\uparrow)^\downarrow
    = {\underbrace{Q\circ D^{-1}\circ\nu^\Trans}_{=\,Q^\uparrow}}
    \circ
    \pi^\Trans
    \circ
    \bigl(\pi\circ
    {\underbrace{\nu\circ D^{-1}\circ\nu^\Trans}_{=\,D^\uparrow}}
    \circ
    \pi^\Trans\bigr)^{-1}
    = Q .
\end{equation*}
The second assertion holds trivially.
\par
\eqref{prop:Q-auf-ab.d} By our assumptions on $\tilde{Q}$ and $\tilde{B}$, we
have $\tilde{Q} = \tilde{Q}\circ\pi^\Trans\circ\nu^\Trans$. So, from
\eqref{eq:Q-ab}, \eqref{eq:Q-auf} and \eqref{eq:Dab}, we obtain
\begin{equation*}
    (\tilde{Q}^\downarrow)^\uparrow
    = {\underbrace{\tilde{Q}\circ\pi^\Trans\circ(\pi\circ\tilde{D}\circ\pi^\Trans)^{-1}}_{=\,\tilde{Q}^\downarrow}}
    \circ
    {\underbrace{(\pi\circ\tilde{D}\circ\pi^\Trans)}_{=\,(\tilde{D}^\downarrow)^{-1}}}
    \circ\nu^\Trans
    = \tilde{Q} \circ\pi^\Trans\circ\nu^\Trans
    =\tilde{Q} .
\end{equation*}
Again, the second assertion holds trivially.
\end{proof}

\begin{prop}\label{prop:AO-beta=Ow}
Let $\VQ$ be a metric vector space such that the polar form $B$ of $Q$ is
non{\trenn}degenerate, let $Q^\uparrow$ be given as in \eqref{eq:Q-auf} and let
$c\in F^\times$. Then, with $\beta$ is as in \eqref{eq:beta},
$\AOrth\VQ^{\,\beta}=\Oweak\bigl(\FV^*,cQ^\uparrow\bigr)$.
\end{prop}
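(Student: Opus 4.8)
The plan is to reduce the statement to $c=1$ and then to recognise $\Oweak(\FV^*,Q^\uparrow)$ as the intersection of the full orthogonal group $\Orth(\FV^*,Q^\uparrow)$ with the stabiliser group $\AGL(\vV)^{\,\beta}$. First, since multiplying a quadratic form by a factor $c\in F^\times$ changes neither its isometries (divide the equation $cQ^\uparrow\circ\phi=cQ^\uparrow$ by $c$) nor the radical of its polar form ($cB^\uparrow$ and $B^\uparrow$ have the same orthogonality relation), one has $\Oweak(\FV^*,cQ^\uparrow)=\Oweak(\FV^*,Q^\uparrow)$, so it suffices to treat $c=1$. By Proposition~\ref{prop:Q-auf-ab}~\eqref{prop:Q-auf-ab.a} the radical of $B^\uparrow$ is $F(1,\vo^*)$, and by Lemma~\ref{lem:AGL-rep} the elementwise stabiliser of $F(1,\vo^*)$ in $\GL(\FV^*)$ is precisely $\AGL(\vV)^{\,\beta}$. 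Consequently
\[
    \Oweak(\FV^*,Q^\uparrow)=\Orth(\FV^*,Q^\uparrow)\cap\AGL(\vV)^{\,\beta},
\]
and the whole task is to compute this intersection.

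Next I would translate the isometry condition on a $\beta$-image into a condition on its linear part. For $\gamma\in\AGL(\vV)$ written as in \eqref{eq:gamma}, formula \eqref{eq:gamma-beta} shows that $\gamma^{\,\beta}$ acts on the second coordinate by $\va^*\mapsto(\gamma_{+}^\Trans)^{-1}(\va^*)$, while $Q^\uparrow$ discards the first coordinate by \eqref{eq:Q-auf}. Hence $Q^\uparrow\circ\gamma^{\,\beta}=Q^\uparrow$ is \emph{independent} of the translation vector $\vt$ and amounts to $Q\bigl(D^{-1}(\gamma_{+}^\Trans)^{-1}(\va^*)\bigr)=Q\bigl(D^{-1}(\va^*)\bigr)$ for all $\va^*\in\vV^*$. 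Since $B$ is non{\trenn}degenerate, $D$ is bijective; substituting $\va^*=D(\vy)$ turns this into $Q\bigl(\psi(\vy)\bigr)=Q(\vy)$ for all $\vy\in\vV$, where $\psi:=D^{-1}\circ(\gamma_{+}^\Trans)^{-1}\circ D\in\GL(\vV)$. In other words, $\gamma^{\,\beta}$ is an isometry of $Q^\uparrow$ exactly when $\psi\in\Orth\VQ$.

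The crux is then to prove $\psi\in\Orth\VQ$ if and only if $\gamma_{+}\in\Orth\VQ$. The map $\phi\mapsto D^{-1}\circ(\phi^\Trans)^{-1}\circ D$ is a group automorphism of $\GL(\vV)$, and by $D^\Trans=D$ (see \eqref{eq:D=DT}) it squares to $\id_\vV$; moreover \eqref{eq:D-phi} says that it fixes every element of $\Orth\VQ$. From these two facts the equivalence follows formally: if $\gamma_{+}\in\Orth\VQ$ then $\psi=\gamma_{+}\in\Orth\VQ$, and conversely, if $\psi\in\Orth\VQ$ then $\psi$ is fixed by the automorphism, so applying it once more to $\gamma_{+}$ gives $\gamma_{+}=\psi\in\Orth\VQ$. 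I expect this to be the main obstacle: in the case $\Char F=2$ membership in $\Orth\VQ$ is strictly stronger than mere preservation of the polar form $B$, so the naive bilinear{\trenn}form computation relating $\gamma_{+}$ to $\psi$ does not settle the matter. It is essential to invoke the \emph{quadratic} identity \eqref{eq:D-phi} for the genuine isometry $\psi$ together with the involutive nature of the automorphism, rather than arguing through $B$ alone.

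Finally I would assemble the pieces. The equivalence just obtained, combined with the fact that $\gamma\in\AOrth\VQ$ means precisely $\gamma_{+}\in\Orth\VQ$ (with $\vt\in\vV$ arbitrary), yields that for $\gamma\in\AGL(\vV)$ the image $\gamma^{\,\beta}$ lies in $\Orth(\FV^*,Q^\uparrow)$ if and only if $\gamma\in\AOrth\VQ$. Since $\beta$ is faithful, intersecting with $\AGL(\vV)^{\,\beta}$ gives $\Orth(\FV^*,Q^\uparrow)\cap\AGL(\vV)^{\,\beta}=\AOrth\VQ^{\,\beta}$; together with the identity displayed in the first paragraph this proves $\AOrth\VQ^{\,\beta}=\Oweak(\FV^*,Q^\uparrow)=\Oweak(\FV^*,cQ^\uparrow)$.
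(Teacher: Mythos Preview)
Your proof is correct and uses the same basic ingredients as the paper (the reduction to $c=1$, Lemma~\ref{lem:AGL-rep}, Proposition~\ref{prop:Q-auf-ab}\,\eqref{prop:Q-auf-ab.a}, and the identity \eqref{eq:D-phi}), but the organisation differs in a noteworthy way. The paper verifies the two inclusions by two separate explicit chains of equalities: for the forward direction it computes $Q^\uparrow\circ\mu^{\,\beta}$ using $\nu^\Trans\circ\mu^{\,\beta}=(\mu_+^\Trans)^{-1}\circ\nu^\Trans$ together with \eqref{eq:D-phi}; for the converse it computes $Q\circ\gamma_+$ by inserting $D^\uparrow$ and exploiting $\gamma^{\,\zeta}\circ D^\uparrow=D^\uparrow\circ\gamma^{\,\beta}$, which is \eqref{eq:D-phi} applied to $\gamma^{\,\beta}$ in $(\FV^*,Q^\uparrow)$. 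You instead isolate the single equivalence $\gamma_+\in\Orth\VQ\Leftrightarrow\psi\in\Orth\VQ$ and settle it once by observing that $\Phi\colon\phi\mapsto D^{-1}\circ(\phi^\Trans)^{-1}\circ D$ is an involutive automorphism of $\GL(\vV)$ fixing $\Orth\VQ$ pointwise (the involutivity being the computation $\Phi^2(\phi)=\phi$ via $D^\Trans=D$). This packaging is more symmetric and avoids the second, longer computation of the paper; what the paper's approach buys in return is that each step is an explicit identity between maps, with no need to introduce the auxiliary automorphism or to invoke its fixed-point property abstractly.
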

\begin{proof}
From $\Oweak\bigl(\FV^*,Q^\uparrow\bigr) =
\Oweak\bigl(\FV^*,cQ^\uparrow\bigr)$, it suffices to verify the claim for
$c=1$. Let any motion $\mu\in\AOrth\VQ$ be given by analogy to
\eqref{eq:gamma}, that is with $\mu_{+}\in\Orth\VQ$ and $\vt\in\vV$. Then
$\mu^{\,\beta}\in\Orth\bigl(\FV^*,Q^\uparrow\bigr)$ follows from
\begin{equation*}
    {\underbrace{Q\circ D^{-1}\circ\nu^\Trans}_{=\,Q^\uparrow}}\circ \mu^{\,\beta}
        = Q\circ D^{-1}\circ (\mu_{+}^\Trans)^{-1}\circ\nu^\Trans\\
        = Q\circ \mu_{+}\circ D^{-1}\circ\nu^\Trans
         = {\underbrace{Q\circ D^{-1}\circ\nu^\Trans}_{=\,Q^\uparrow}}
         ;
\end{equation*}
thereby we argue as follows. First, we use \eqref{eq:Q-auf}. Next, we read off
from \eqref{eq:gamma-beta}, with $\gamma:=\mu$, that
$\nu^\Trans\circ\mu^{\,\beta} = (\mu_{+}^\Trans)^{-1}\circ \nu^\Trans$. Then we
take into account $D^{-1}\circ (\mu_{+}^\Trans)^{-1} = \mu_{+} \circ D^{-1}$,
which follows from rewriting \eqref{eq:D-phi} with $\phi:=\mu_{+}$. By our
assumption, $Q\circ \mu_{+}=Q$ and, finally, we apply \eqref{eq:Q-auf} again.
From Lemma~\ref{lem:AGL-rep}, the radical of ${B^\uparrow}$ is elementwise
invariant under $\mu^{\,\beta}$. Hence
$\mu^{\,\beta}\in\Oweak\bigl(\FV^*,Q^\uparrow\bigr)$.
\par
Conversely, let any isometry belonging to $\Oweak(\FV^*,Q^\uparrow)$ be given.
This isometry fixes $(1,\vo^*)$ and, by Lemma~\ref{lem:AGL-rep}, it can be
written in the form $\gamma^{\,\beta}$ with $\gamma\in\AGL(\vV)$ as in
\eqref{eq:gamma}, that is with $\gamma_{+}\in\GL(\vV)$ and $\vt\in\vV$. It
remains to establish that $Q\circ\gamma_{+}=Q$. The calculation
\begin{equation*}
\begin{aligned}
    Q \circ \gamma_{+} & =  Q \circ \gamma_{+}\circ\pi\circ\nu
         = Q \circ \pi\circ\gamma^{\,\zeta}\circ{\underbrace{\nu \circ D^{-1}\circ\nu^\Trans}_{=\,D^\uparrow}}\circ\pi^\Trans\circ D\\
        &= Q \circ \pi\circ\nu \circ D^{-1}\circ\nu^\Trans\circ\gamma^{\,\beta}\circ\pi^\Trans\circ D
         \\
        &= {\underbrace{Q\circ D^{-1}\circ\nu^\Trans}_{=\,Q^\uparrow}}\circ\gamma^{\,\beta}\circ\pi^\Trans\circ D
         = Q                                                  \circ D^{-1}\circ\nu^\Trans\circ\pi^\Trans\circ D
         = Q
\end{aligned}
\end{equation*}
relies on the following reasoning: First, we multiply by $\id_\vV =
\pi\circ\nu$ and we use $\gamma_{+}\circ\pi\circ\nu =
\pi\circ\gamma^{\,\zeta}\circ\nu$, which follows from \eqref{eq:gamma-zeta}.
Then we multiply by $\id_\vV=D^{-1}\circ\nu^\Trans\circ\pi^\Trans\circ D$.
Applying formula \eqref{eq:D-phi} to $\gamma^{\,\beta}\in
\Oweak(\FV^*,Q^\uparrow)$ and ${D^\uparrow}$ yields $\gamma^{\,\zeta}\circ
{D^\uparrow} = {D^\uparrow}\circ\gamma^{\,\beta} $; compare \eqref{eq:beta}.
Next, we remove $\pi\circ\nu=\id_\vV$. By our assumption,
$Q^\uparrow\circ\gamma^{\,\beta}=Q^\uparrow$. Finally, we cancel
$D^{-1}\circ\nu^\Trans\circ\pi^\Trans\circ D=\id_\vV$.
\end{proof}
\begin{rem}\label{rem:literatur}
Proposition~\ref{prop:Q-auf-ab}~\eqref{prop:Q-auf-ab.a} and
Proposition~\ref{prop:AO-beta=Ow} (with $c:=1$) are sketched without proof in
\cite[p.~102]{helm-05a} using the vector space of all affine functions $\vV\to
F$ rather than $\FV^*$; see Remark~\ref{rem:aff-f}. Analogues of
Proposition~\ref{prop:Q-auf-ab}~\eqref{prop:Q-auf-ab.b} and
Proposition~\ref{prop:AO-beta=Ow} appear in \cite{elle+h-84a} under the extra
assumption $\Char F\neq 2$. Similar results, limited to the case $\dim\vV=2$
and $\Char F\neq 2$, can be found in \cite[\S~8,4--\S~9,4]{bach-73a},
\cite{wolff-67a}, \cite{wolff-67b}, even though the approach from there does
not rely on a weak orthogonal group. See \cite[Sect.~1]{elle-84a} for a
detailed comparison.
\par
The interrelation between the quadratic forms $q_\cP$ and $q_\cL$, as
introduced in \cite[Sect.~2]{stru+s-22a}, does not at all resemble the
interrelation between our $Q$ and $Q^\uparrow$. Nevertheless, for the Euclidean
and Minkowskian planes appearing in \cite{stru+s-22a}, the quadratic form
$q_\cL$ from there admits an interpretation in terms of our $Q^\uparrow$.
\end{rem}

\begin{rem}\label{rem:reflect}
Under the premises of Proposition~\ref{prop:AO-beta=Ow}, the faithful linear
representation $\beta$ maps the set of all affine $Q${\trenn}reflections of
$\bA\VQ$ onto the set of all $Q^\uparrow${\trenn}reflections of
$(\FV^*,Q^\uparrow)$. To be more precise, $\beta$ takes an affine
$Q${\trenn}reflection $\xi_{\vp,\vr}$ as in \eqref{eq:aff-xi} to the
$Q^\uparrow${\trenn}reflection in the direction of $\bigl(-B(\vr,\vp),
D(\vr)\bigr)\in \FV^*$. The straightforward proof, which amounts to
substitutions in \eqref{eq:gamma-beta}, is left to the reader. This bijective
correspondence allows for the translation of theorems about
$Q^\uparrow${\trenn}reflections into theorems about affine
$Q${\trenn}reflections and vice versa.
\par
A correspondence in the same spirit, even though it relies---in our
terminology---on making $\bP(\FV)$ into a \emph{projective metric space}, is
stated in \cite[pp.~164--165]{schroe-92a}.
\end{rem}

\begin{rem}
We maintain the assumptions of Proposition~\ref{prop:AO-beta=Ow}. The quadratic
form $ Q\circ\eps_+^{-1}\colon\{0\}\times\vV \to F \colon (0,\vx)\mapsto
Q(\vx)$, where $\eps$ is given as \eqref{eq:epsilon}, defines a quadric
\cite[p.~964]{schroe-95a} in the hyperplane at infinity of $\bP(\FV)$, namely
\begin{equation*}
    \cF := \bigl\{F(0,\vx)\mid \vx\in\vV\setminus\{\vo\} \mbox{~~and~~} Q(\vx)=0 \bigr\} .
\end{equation*}
This quadric is known as the \emph{absolute quadric} or the \emph{quadric at
infinity} related with the projective embedding of $\bA\VQ$
\cite[p.~267]{burau-61a}, \cite[p.~96]{schroe-92a}. All points of $\cF$ are
simple \cite[p.~965]{schroe-95a}. Likewise, $Q^\uparrow$ defines the quadric
\begin{equation*}
    \cF^\uparrow := \Bigl\{F(a_0,\va^*)\mid (a_0,\va^*)\in\FV^*\setminus\bigl\{(0,\vo^*)\bigr\}
    \mbox{~~and~~} Q^\uparrow(a_0,\va^*)=0 \Bigr\}
\end{equation*}
in $\bP(\FV^*)$. All points of $\cF^\uparrow$ other than $F(1,\vo^*)$ are
simple or, in other words, $\cF^\uparrow$ is a cone with a one-point vertex.
\par
A theorem in \cite[p.~205]{burau-61a} allows us to describe the geometric
relationship between these two quadrics when $F$ is the field of complex
numbers: If $n=\dim\vV\geq 2$, then the points of $\cF^\uparrow$ are the
annihilators of those hyperplanes of\/ $\bP(\FV)$ which contain a tangent space
of $\cF$ with projective dimension $n-2$. This description remains valid in our
more general setting provided that $\cF$ is non-empty. However, the proof of
the underlying theorem, as given \emph{loc.\ cit.}, fails to cover the case
$\Char F=2$.
\par
In the particular case where $\dim\vV=3$ and $\Char F\neq2$, we have a conic
$\cF$ lying in the plane at infinity $\{0\}\times\vV$ of $\bP(\FV)$.
Furthermore, the points of $\cF^\uparrow$ arise as annihilators of the planes
containing a tangent line of $\cF$. See Figure~\ref{abb:dualkegel}, where a few
of these planes are depicted.
\begin{figure}[!h]\unitlength0.9\textwidth
  \centering
  \begin{picture}(1,0.35) 
    \put(0,0){\includegraphics[width=1\unitlength]{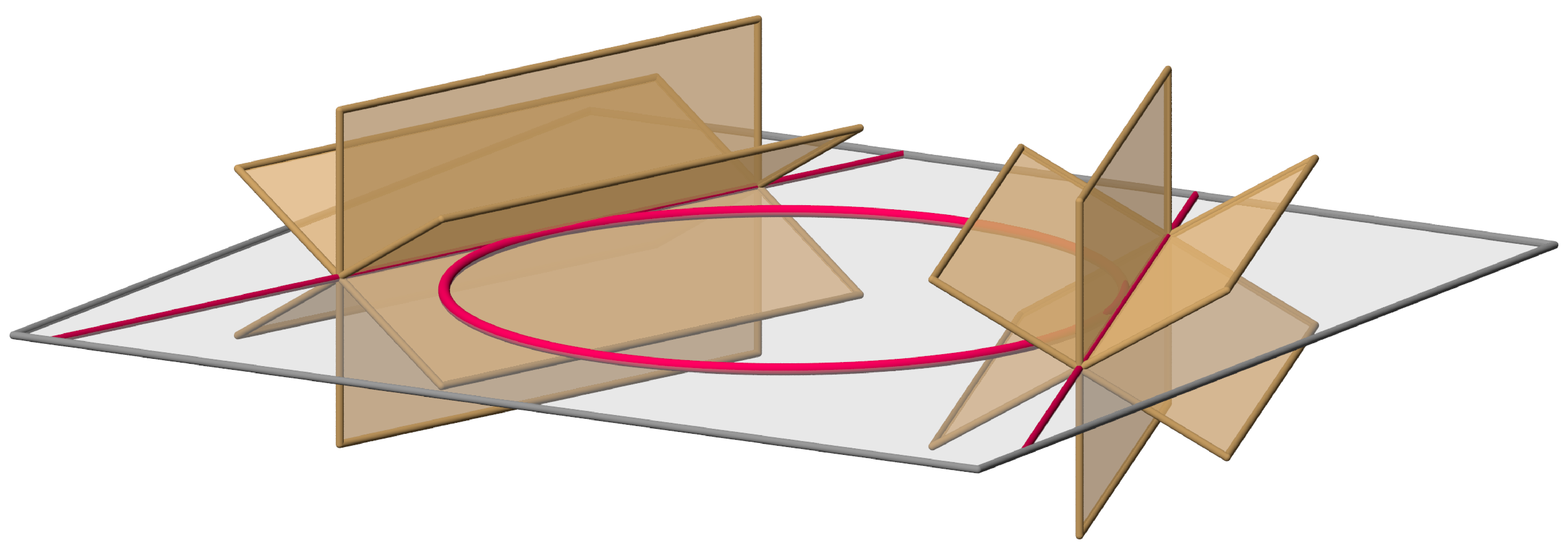}}
    \put(0.5,0.08){$\cF$}
    \put(0.91,0.13){$\{0\}\times\vV$}
  \end{picture}
   \caption{Planes containing a tangent line of $\cF$, $\dim\vV=3$ and $F=\bR$}
   \label{abb:dualkegel}
\end{figure}
\end{rem}

\begin{rem}\label{rem:modif}
Let $\Char F\neq 2$. Then, due to $\frac{1}{2} B(\vx,\vx)=Q(\vx)$ for all
$\vx\in\vV$, it is highly common to associate with $Q$ the bilinear form
$\frac{1}{2} B$ rather than $B$. By doing so, the mapping $\frac{1}{2} D$ takes
over the role of $D$. This suggests a variant form of
Proposition~\ref{prop:Q-auf-ab} by considering the pullback of $Q$ along
$\bigl(\frac{1}{2}D\bigr)^{-1}\circ\nu^\Trans$. From $\bigl(\frac{1}{2}
D\bigr)^{-1} = 2 D^{-1}$, in this way the quadratic form $4Q^\uparrow$ is being
linked with $Q$. Proposition~\ref{prop:AO-beta=Ow} covers this variant by
putting $c:=4$.
\end{rem}

\begin{rem}\label{rem:koo}
Let us write down the transition from $Q$ to $Q^\uparrow$, as in
Proposition~\ref{prop:Q-auf-ab}, in terms of coordinates. Upon choosing any
basis of $\vV$, say
\begin{equation}\label{eq:V-basis}
    \{\ve_1,\ve_2,\ldots,\ve_n\} ,
\end{equation}
we denote the corresponding dual basis of $\vV^*$ by
\begin{equation}\label{eq:V-dualbasis}
    \{\ve_1^*,\ve_2^*,\ldots,\ve_n^*\} .
\end{equation}
Then $\FV$ admits the basis
$\bigl\{(1,\vo),(0,\ve_1),(0,\ve_2),\ldots,(0,\ve_n)\bigr\}$, which has
\begin{equation}\label{eq:FV-dualbasis}
    \bigl\{(1,\vo^*),(0,\ve_1^*),(0,\ve_2^*),\ldots,(0,\ve_n^*)\bigr\}
\end{equation}
as its dual basis. There is at least one matrix $W=(w_{ij})$ with entries in
$F$ and $i,j$ ranging in $\{1,2,\ldots,n\}$ such that
\begin{equation}\label{eq:Q-koo}
    Q\left(\sum_{h=1}^{n} x_{h}\ve_{h}\right)
    = \sum_{i=1}^{n}\sum_{j=1}^{n} w_{ij} x_{i}x_{j}
    \mbox{~~for all~~}
    x_{1},x_{2},\ldots,x_{n}\in F .
\end{equation}
The matrix of $B$ relative to the basis \eqref{eq:V-basis} reads $W+W^\Trans$,
where $W^\Trans$ denotes the transpose of $W$. From $B$ being
non{\trenn}degenerate, $W+W^\Trans$ turns out invertible. We define the block
diagonal matrix\footnote{Note that the second block is congruent to $W$, since
$(W+W^\Trans)^{-1}$ is a symmetric matrix.}
\begin{equation*}
    W^\uparrow:=\diag\left(0,(W+W^\Trans)^{-1} \cdot W \cdot (W+W^\Trans)^{-1}\right)
\end{equation*}
with row and column indices of $W^\uparrow$ ranging in $\{0,1,\ldots,n\}$. The
$(i,j)$-entry of $W^\uparrow$ will be written as $w^\uparrow_{ij}$. Since
$(W+W^\Trans)^{-1}$ describes $D^{-1}$ relative to the bases
\eqref{eq:V-dualbasis} and \eqref{eq:V-basis}, equations \eqref{eq:Q-auf} and
\eqref{eq:Q-koo} yield
\begin{equation*}
    Q^\uparrow\left(a_0(1,\vo^*)+\sum_{h=1}^{n} a_h(0,\ve_h^*)\right)
    = \sum_{i=0}^{n}\sum_{j=0}^{n} w^\uparrow_{ij} a_{i}a_{j}
    \mbox{~~for all~~}
    a_{0},a_{1},\ldots,a_{n}\in F .
\end{equation*}
\par
If $\Char F\neq 2$, then the above calculation can be simplified by choosing
$W$ as a \emph{symmetric} matrix. So, $W+W^\Trans=2W$ and one readily
verifies\footnote{As we observed in Remark~\ref{rem:modif}, using $4Q^\uparrow$
simplifies matters when $\Char F\neq 2$.} $4W^\uparrow = \diag(0, W^{-1})$.
Moreover, we may start in \eqref{eq:V-basis} with an \emph{orthogonal} basis of
$\VQ$, which makes the symmetric matrix $W$ a diagonal matrix and simplifies
the calculation of $W^{-1}$. In particular, over the real numbers there is a
choice of \eqref{eq:V-basis} with
\begin{equation*}
    W = \diag(\underbrace{1,1,\ldots,1}_{p},\underbrace{-1,-1,\ldots,-1}_{n-p}) = W^{-1} ,
\end{equation*}
that is, $\frac{1}{2}B$ (and likewise $B$) has signature $(p,n-p,0)$. Then
\begin{equation*}
    4 W^\uparrow = \diag(0,W) = \diag(0,\underbrace{1,1,\ldots,1}_{p},\underbrace{-1,-1,\ldots,-1}_{n-p}).
\end{equation*}
This illustrates how the work about the real case fits into our approach; see
the references at the beginning of Section~\ref{se:intro}.
\par
If $\Char F = 2$, then $B$ is a non{\trenn}degenerate alternating bilinear form
and so $\dim\vV=n$ has to be even. We therefore are in a position to choose the
basis \eqref{eq:V-basis} in such a way that the (alternating) $n\times n$
matrix of $B$ relative to \eqref{eq:V-basis} takes the
\emph{block{\trenn}diagonal form}
\begin{equation*}
\diag\left(\left(\begin{array}{cc}
                     0 & 1 \\
                     1 & 0 \\
                 \end{array}\right),
                 \left(\begin{array}{cc}
                     0 & 1 \\
                     1 & 0 \\
                 \end{array}\right),
                 \ldots,
                 \left(\begin{array}{cc}
                     0 & 1 \\
                     1 & 0 \\
                 \end{array}\right)
          \right) .
\end{equation*}
Next, we may select an \emph{upper triangular matrix} $W$ subject to
\eqref{eq:Q-koo}. As $W+W^\Trans$ equals the above-noted block diagonal matrix,
we have
\begin{equation*}
    W=\diag\left(\left(\begin{array}{cc}
                     w_{11}& 1 \\
                     0 & w_{22} \\
                 \end{array}\right),
                 \left(\begin{array}{cc}
                     w_{33} & 1 \\
                     0 & w_{44} \\
                 \end{array}\right),
                 \ldots,
                 \left(\begin{array}{cc}
                     w_{n-1,n-1} & 1 \\
                     0 & w_{nn} \\
                 \end{array}\right)
          \right) .
\end{equation*}
From $W+W^\Trans$ being self{\trenn}inverse, we end up with
\begin{equation*}
    W^\uparrow =\diag\left(0,
                \left(\begin{array}{cc}
                     w_{22}& 0 \\
                     1 & w_{11} \\
                 \end{array}\right),
                 \left(\begin{array}{cc}
                     w_{44} & 0 \\
                     1 & w_{33} \\
                 \end{array}\right),
                 \ldots,
                 \left(\begin{array}{cc}
                     w_{nn} & 0 \\
                     1 & w_{n-1,n-1} \\
                 \end{array}\right)
          \right) .
\end{equation*}
\end{rem}

We now turn to the problem of describing \emph{all} solutions of the problem
posed at the beginning of this section. The following corollary to
Lemma~\ref{lem:trans} will be a powerful tool, since it does not involve a
quadratic form $Q\colon\vV\to F$.

\begin{cor}\label{cor:trans}
Let $\vV$ be a vector space and let $\tilde{Q}\colon\FV^*\to F$ be a quadratic
form. Then, with $\beta$ as in \eqref{eq:beta}, the following are equivalent.
\begin{enumerate}\itemsep0pt\parsep0pt
\item\label{cor:trans.a} The $\beta$-image of the translation group of
    $\vV$ is contained in $\Oweak(\FV^*,\tilde{Q})$.
\item\label{cor:trans.b} One of the subsequent conditions holds:
    \begin{gather}
        \tilde{Q}(1,\vo^*)=0 \mbox{~~and~~} (\FV^*)^{\tilde{\perp}} = F(1,\vo^*);
            \label{eq:cor-trans}\\
        \dim(\FV^*) = 1 ;
            \label{eq:cor-trans=1}\\
        \dim(\FV^*) = 2 ,\; \tilde{Q}(1,\vo^*)\neq 0 ,\;
        \dim(\FV^*)^{\tilde{\perp}} = 0 \mbox{~~and~~} |F| = 2.
            \label{eq:cor-trans=2}
    \end{gather}
\end{enumerate}
\end{cor}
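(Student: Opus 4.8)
The plan is to show that condition~\eqref{cor:trans.a} is nothing but Lemma~\ref{lem:trans}~\eqref{lem:trans.a} read off for the metric vector space $(\FV^*,\tilde{Q})$ and the distinguished non-zero vector $\vf:=(1,\vo^*)\in\FV^*$. Once the two groups are identified, the equivalence claimed in the corollary is obtained simply by substituting $\FV^*$, $\tilde{Q}$ and $(1,\vo^*)$ for $\vV$, $Q$ and $\vf$ in the conclusion of Lemma~\ref{lem:trans}.

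The heart of the argument is the explicit description of the $\beta$-image of the translation group. A translation of $\vV$ is an affinity $\gamma$ as in \eqref{eq:gamma} with $\gamma_{+}=\id_\vV$ and arbitrary $\vt\in\vV$; inserting $\gamma_{+}=\id_\vV$ into \eqref{eq:gamma-beta} yields $\gamma^{\,\beta}(a_0,\va^*)=(a_0-\li\va^*,\vt\re,\va^*)$. Viewing $\FV$ as the dual of $\FV^*$ and using the pairing of Section~\ref{se:aff}, I would check that this map equals the transvection $\delta_{(0,-\vt),\,(1,\vo^*)}$ of $\FV^*$, whose direction $\vf=(1,\vo^*)$ is fixed and whose parameter $(0,-\vt)$ lies in $\FV=(\FV^*)^*$. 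It then remains to compute the annihilator $\{\vf\}^\circ\subseteq\FV$: from $\li(1,\vo^*),(x_0,\vx)\re=x_0$ one gets $\{\vf\}^\circ=\{0\}\times\vV$. Since $(0,-\vt)$ runs through all of $\{0\}\times\vV$ as $\vt$ runs through $\vV$, the $\beta$-image of the translation group is exactly $\{\delta_{\vc,\,\vf}\mid\vc\in\{\vf\}^\circ\}$, that is, the group featuring in Lemma~\ref{lem:trans}~\eqref{lem:trans.a}.

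With this identification in hand, Lemma~\ref{lem:trans} gives the equivalence of \eqref{cor:trans.a} with its condition~\eqref{lem:trans.b}, and the three alternatives \eqref{eq:trans}, \eqref{eq:trans=1}, \eqref{eq:trans=2} turn verbatim into \eqref{eq:cor-trans}, \eqref{eq:cor-trans=1}, \eqref{eq:cor-trans=2} under the above substitutions, with $(\FV^*)^{\tilde{\perp}}$ replacing $\vV^\perp$ and $F(1,\vo^*)$ replacing $F\vf$. I expect the only real obstacle to be the bookkeeping in the middle step: keeping the double-duality identification $\FV=(\FV^*)^*$ and the argument order of the pairing straight, so that the translation by $\vt$ is matched with the correct transvection of $\FV^*$ and $\{\vf\}^\circ$ is computed in $\FV$ rather than in $\FV^*$. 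Everything else is a direct transcription of Lemma~\ref{lem:trans}.
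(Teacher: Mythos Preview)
Your proposal is correct and follows essentially the same approach as the paper: you identify the $\beta$-image of the translation group with the transvection group $\bigl\{\delta_{\vc,\,\vf}\mid\vc\in\{\vf\}^\circ\bigr\}$ for $\vf=(1,\vo^*)$ in $(\FV^*,\tilde{Q})$, and then invoke Lemma~\ref{lem:trans} with the obvious substitutions. The paper carries out exactly this computation and this reduction.
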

\begin{proof}
Let any translation $\gamma\in\AGL(\vV)$ be given as in \eqref{eq:gamma}, that
is with $\gamma_+ = \id_\vV$ and $\vt\in\vV$. A comparison of
\eqref{eq:gamma-beta} with \eqref{eq:delta} readily shows
\begin{equation*}
    \gamma^{\,\beta} =
    \delta_{(0,-\vt),(1,\vo^*)}\in \Delta\bigl(\FV^*,(1,\vo^*)\bigr) ;
\end{equation*}
see also \eqref{eq:Delta}. The annihilator $\bigl\{(1,\vo^*)\bigr\}^\circ \leq
\FV$ comprises precisely the vectors $(x_0,\vx)$ with $x_0 = 0$ and
$\vx\in\vV$. So, the $\beta$-image of the translation group of $\vV$ equals
\begin{equation*}
    \Bigl\{\delta_{(x_0,\vx),(1,\vo^*)}\mid (x_0,\vx)\in
\bigl\{(1,\vo^*)\bigr\}^\circ\Bigr\} .
\end{equation*}
By the above, the proof is reduced to a rewording of Lemma~\ref{lem:trans}: In
the present context the ``non-zero vector $\vf$'' and the ``metric vector space
$\VQ$'' from there have to be replaced with the ``linear form $(1,\vo^*)$'' and
the ``metric vector space $(\FV^*,\tilde{Q})$'', respectively.
\end{proof}

Next, we present our first main result.

\begin{thm}\label{thm:umkehr}
Let $\VQ$ and $(\FV^*,\tilde{Q})$ be metric vector spaces such that, with
$\beta$ as in \eqref{eq:beta}, one of the equations \eqref{eq:oder1} or
\eqref{eq:oder2} is satisfied. Furthermore, suppose that none of the subsequent
conditions applies:
\begin{align}
    \dim\vV=0 \mbox{~~and~~}& {\Char F = 2} ; \label{eq:dim=0} \\
    \dim\vV=1 \mbox{~~and~~}& |F| \leq 3 ; \label{eq:dim=1} \\
    \dim\vV=2 \mbox{~~and~~}& |F| = 2 .    \label{eq:dim=2}
\end{align}
Then the following hold:
\begin{enumerate}\itemsep0pt\parsep0pt
\item\label{thm:umkehr.a} The polar form of $Q$ is non{\trenn}degenerate.

\item\label{thm:umkehr.b} The quadratic form $\tilde{Q}$ is a non-zero
    scalar multiple of that quadratic form $Q^\uparrow$ which arises from
    $Q$ according to \eqref{eq:Q-auf}.
\end{enumerate}
\end{thm}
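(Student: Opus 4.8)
The plan is to run the forward implication of Proposition~\ref{prop:AO-beta=Ow} in reverse: from the assumed group equality I shall reconstruct an explicit quadratic form on $\vV$ whose polar form is non-degenerate, and then identify it with $Q$ up to a scalar by invoking the rigidity statement Lemma~\ref{lem:Orth1=2}. First I would observe that every translation of $\vV$ has linear part $\id_\vV\in\Oweak\VQ\subseteq\Orth\VQ$, so the translation group is contained in $\AOweak\VQ\subseteq\AOrth\VQ$; hence, under either \eqref{eq:oder1} or \eqref{eq:oder2}, its $\beta$-image lies in $\Oweak(\FV^*,\tilde{Q})$. This is exactly hypothesis~\eqref{cor:trans.a} of Corollary~\ref{cor:trans}, so one of \eqref{eq:cor-trans}, \eqref{eq:cor-trans=1}, \eqref{eq:cor-trans=2} must hold.

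Since $\dim(\FV^*)=\dim\vV+1$, condition \eqref{eq:cor-trans=1} can occur only when $\dim\vV=0$, whereas \eqref{eq:cor-trans=2} requires $\dim\vV=1$ and $|F|=2$ and is therefore ruled out by \eqref{eq:dim=1}. I would dispose of the case $\dim\vV=0$ at once: here \eqref{eq:dim=0} gives $\Char F\neq 2$, the group $\AGL(\vV)$ is trivial, so both motion groups are trivial and $\Oweak(\FV^*,\tilde{Q})=\{\id_{\FV^*}\}$; on the one-dimensional space $\FV^*$ with $\Char F\neq 2$ this forces $\tilde{Q}=0$, and as $Q$ and $Q^\uparrow$ are likewise the zero form, both assertions hold trivially. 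Thus I may assume $\dim\vV\geq 1$, in which case Corollary~\ref{cor:trans} leaves only the alternative \eqref{eq:cor-trans}, namely $\tilde{Q}(1,\vo^*)=0$ and $(\FV^*)^{\tilde{\perp}}=F(1,\vo^*)$.

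These are precisely the hypotheses of Proposition~\ref{prop:Q-auf-ab}~\eqref{prop:Q-auf-ab.b}, so I would form the quadratic form $\tilde{Q}^\downarrow$ on $\vV$; it has a non-degenerate polar form and satisfies $(\tilde{Q}^\downarrow)^\uparrow=\tilde{Q}$ by part~\eqref{prop:Q-auf-ab.d}. Applying Proposition~\ref{prop:AO-beta=Ow} to $(\vV,\tilde{Q}^\downarrow)$ with $c:=1$ then gives $\AOrth(\vV,\tilde{Q}^\downarrow)^{\,\beta}=\Oweak(\FV^*,\tilde{Q})$, which by hypothesis equals $\AOrth\VQ^{\,\beta}$ (under \eqref{eq:oder1}) or $\AOweak\VQ^{\,\beta}$ (under \eqref{eq:oder2}). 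As $\beta$ is faithful, the corresponding affine groups coincide; passing to linear parts via the surjective homomorphism $\mu\mapsto\mu_{+}$, of which these affine motion and weak motion groups are the full preimages, I obtain $\Orth(\vV,\tilde{Q}^\downarrow)=\Orth\VQ$ or $\Orth(\vV,\tilde{Q}^\downarrow)=\Oweak\VQ$.

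Now Lemma~\ref{lem:Orth1=2}, applied with $Q_1:=\tilde{Q}^\downarrow$ (so that $\vV^{\perp_1}=\{\vo\}$) and $Q_2:=Q$ — its two excluded cases being subsumed under \eqref{eq:dim=1} and \eqref{eq:dim=2} — yields $\tilde{Q}^\downarrow=cQ$ for some $c\in F^\times$. Comparing polar forms gives $B=c^{-1}\tilde{B}^\downarrow$, which is non-degenerate, establishing~\eqref{thm:umkehr.a}. Knowing that $B$ is non-degenerate, I may apply the $\uparrow$ construction together with the scalar rule of Proposition~\ref{prop:Q-auf-ab}~\eqref{prop:Q-auf-ab.c} to the identity $\tilde{Q}^\downarrow=cQ$, getting $\tilde{Q}=(\tilde{Q}^\downarrow)^\uparrow=(cQ)^\uparrow=cQ^\uparrow$, which is~\eqref{thm:umkehr.b}. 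The work here is organizational rather than computational: the genuine point requiring care is the reduction to case \eqref{eq:cor-trans}, i.e.\ checking that the low-dimensional alternatives of Corollary~\ref{cor:trans} and the exceptional strata of Lemma~\ref{lem:Orth1=2} are \emph{exactly} matched by the three excluded conditions \eqref{eq:dim=0}--\eqref{eq:dim=2}, so that outside them the argument proceeds uniformly for both \eqref{eq:oder1} and \eqref{eq:oder2}.
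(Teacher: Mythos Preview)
Your proof is correct and follows essentially the same route as the paper's: invoke Corollary~\ref{cor:trans} via the translation group to pin down condition~\eqref{eq:cor-trans}, build $\tilde{Q}^\downarrow$ and apply Proposition~\ref{prop:AO-beta=Ow} to it, then compare orthogonal groups and finish with Lemma~\ref{lem:Orth1=2} and Proposition~\ref{prop:Q-auf-ab}. The paper handles $\dim\vV=0$ as a separate case before touching Corollary~\ref{cor:trans}, whereas you thread it into the corollary's trichotomy first and then argue directly, but the substance is identical.
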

\begin{proof}
\emph{Case} 1: $\dim\vV = 0$. Then \eqref{thm:umkehr.a} holds trivially, since
$Q$ is a zero quadratic form. Furthermore, $Q^\uparrow\colon \FV^*\to F$ is
also a zero quadratic form. Since \eqref{eq:dim=0} does not apply, the weak
orthogonal group of any non-zero quadratic form on $\FV^*$ contains
$-\id_{\FV^*}\notin \AOrth\VQ^{\,\beta} = \{\id_{\FV^*}\}$. Thus $\tilde{Q} =
Q^\uparrow$, which verifies \eqref{thm:umkehr.b}.
\par
\emph{Case} 2: $\dim\vV\geq 1$. By our assumptions, the $\beta$-image of the
translation group of $\vV$ is contained in $\Oweak(\FV^*,\tilde{Q})$. So, from
Corollary~\ref{cor:trans} and from \eqref{eq:dim=1} being ruled out, it follows
that \eqref{eq:cor-trans} is satisfied. Consequently, $\tilde{Q}^\downarrow
\colon \vV \to F$ is well defined and its polar form $\tilde{B}^\downarrow$ is
non{\trenn}degenerate; see
Proposition~\ref{prop:Q-auf-ab}~\eqref{prop:Q-auf-ab.b}. Next, we employ
Proposition~\ref{prop:AO-beta=Ow} on $(\vV, \tilde{Q}^\downarrow)$. Since
$(\tilde{Q}^\downarrow)^\uparrow = \tilde{Q}$ according to
Proposition~\ref{prop:Q-auf-ab}~\eqref{prop:Q-auf-ab.d}, we have
$\AOrth(\vV,\tilde{Q}^\downarrow)^{\,\beta} = \Oweak(\FV^*,\tilde{Q})$. Going
over to $\beta${\trenn}preimages gives, by \eqref{eq:oder1} or
\eqref{eq:oder2}, $\AOrth(\vV,\tilde{Q}^\downarrow) = \AOrth\VQ$ or
$\AOrth(\vV,\tilde{Q}^\downarrow) = \AOweak\VQ$. Consequently,
\begin{equation*}
    \Orth(\vV,\tilde{Q}^\downarrow) = \Orth\VQ \mbox{~~or~~}
    \Orth(\vV,\tilde{Q}^\downarrow) = \Oweak\VQ  .
\end{equation*}
From the above statement and due to the exclusion of the cases appearing in
\eqref{eq:dim=1} and \eqref{eq:dim=2}, we are now in a position to make use of
Lemma~\ref{lem:Orth1=2} with $\tilde{Q}^\downarrow$ and $Q$ playing the roles
of the quadratic forms $Q_1$ and $Q_2$, respectively. So, there is a $c\in
F^\times$ with $cQ = \tilde{Q}^\downarrow$, which establishes
\eqref{thm:umkehr.a}. The last equation together with
Proposition~\ref{prop:Q-auf-ab} gives $cQ^\uparrow = (cQ)^\uparrow =
(\tilde{Q}^\downarrow)^\uparrow = \tilde{Q}$, that is, \eqref{thm:umkehr.b} is
satisfied too.
\end{proof}

Theorem~\ref{thm:umkehr} shows---loosely speaking---that in general $\tilde{Q}$
is determined by $Q$ up to a non-zero scalar factor.

\begin{rem}\label{rem:tab}
In Tables~\ref{tab:1}--\ref{tab:4}, we summarise all dyads of metric vector
spaces $\VQ$ and $(\FV^*,\tilde{Q})$ satisfying \eqref{eq:oder1} or
\eqref{eq:oder2} \emph{without} being covered by Theorem~\ref{thm:umkehr}. We
thereby make use of a fixed basis of $\vV$ as in \eqref{eq:V-basis} together
with its dual basis \eqref{eq:V-dualbasis}, the basis \eqref{eq:FV-dualbasis}
of $\FV^*$ and coordinates $x_1,x_2,\ldots,x_n,a_0,a_1,\ldots,a_n\in F$.
\par
Each table is to be read in two ways: First, quadratic forms $Q$ and
$\tilde{Q}$ appear in the \emph{same block}, that is to mean between two
adjacent horizontal lines, precisely when \eqref{eq:oder1} or \eqref{eq:oder2}
applies. Take notice that in all these instances equation \eqref{eq:oder1} is
satisfied. So, all quadratic forms $Q$ from within the same block share a
common orthogonal group and all quadratic forms $\tilde{Q}$ from within the
same block share a common weak orthogonal group. Second, quadratic forms $Q$
and $\tilde{Q}$ appear in the \emph{same row} if, and only if, $\tilde{Q} =
Q^\uparrow$ or, equivalently, $Q = \tilde{Q}^\downarrow$. A quadratic form
appears in a \emph{row with one entry left in blank} if, and only if, it fails
to meet the corresponding hypotheses of
Proposition~\ref{prop:Q-auf-ab}~\eqref{prop:Q-auf-ab.a} or
\eqref{prop:Q-auf-ab.b}.
\par
There are but a few cases, where $\AOweak\VQ$ is a \emph{proper subgroup} of
$\AOrth\VQ$. This happens precisely when $Q$ is given as in the last row of
Table~\ref{tab:3} or as in the second, fourth, sixth or eighth row of
Table~\ref{tab:4}.
\par
Let $\dim\vV=0$ and $\Char F = 2$. The first row of Table~\ref{tab:1} arises
from that solution of \eqref{eq:oder1}, where both $Q$ and
$\tilde{Q}=Q^\uparrow$ are zero quadratic forms. Due to $\Char F=2$, any
non-zero quadratic form on $\FV^*$ has $\{\id_{\FV^*}\}$ as its weak orthogonal
group. Thus, together with the zero quadratic form on $\vV$, it provides a
solution of \eqref{eq:oder1}. Therefore, \emph{all} non-zero quadratic forms on
$\FV^*$ are listed on the right hand side of the table's second row.
\begin{table}[h!]\centering\renewcommand{\arraystretch}{1.4}
    \begin{tabular}{|p{0.3\textwidth}|p{0.3\textwidth}|}
        \hline
        $Q(\vo)$ & $\tilde{Q}(a_0,\vo^*)$              \\
        \hline\hline
        $=0$     & $=0$                                \\
                 & $=\tilde{w}_{00}a_0^2$ (with $\tilde{w}_{00}\in F^\times$)    \\
        \hline
    \end{tabular}
    \caption{$\dim\vV=0$ and $\Char F=2$}\label{tab:1}
\end{table}
\par
Let $\dim\vV=1$ and $|F| = 2$. If $Q$ and $\tilde{Q}$ satisfy \eqref{eq:oder1}
or \eqref{eq:oder2}, then $\tilde{Q}$ meets the requirements of
Corollary~\ref{cor:trans}~\eqref{cor:trans.a}. Consequently, at least one of
the three conditions of Corollary~\ref{cor:trans}~\eqref{cor:trans.b} applies.
Take notice that \eqref{eq:cor-trans} cannot be met, since $\dim(\FV^*)=2$ and
$\Char F = 2$ forces $\dim (\FV^*)^{\tilde{\perp}}$ to be even. Equation
\eqref{eq:cor-trans=1} fails obviously. Thus we are led to
\eqref{eq:cor-trans=2}, which is satisfied by precisely two quadratic forms.
The first one appears in Table~\ref{tab:2} on the right hand side of the first
row, since it provides a solution of \eqref{eq:oder1} together with the
quadratic forms on $\vV$, as listed on the left hand side of the second and
third row. The second one is given by $(a_0,a_1\ve_1^*)\mapsto
a_0^2+a_0a_1+a_1^2$ for all $a_0,a_1\in F$. But its weak orthogonal group,
which equals $\GL(\FV^*)$, is not contained in $\AGL(\vV)^{\,\beta}$ due to
$6=\bigl|{\GL(\FV^*)}\bigr| > \bigl|{\AGL(\vV)^{\,\beta}}\bigr| = 2$. So, this
second quadratic form does not appear in Table~\ref{tab:2}.
\begin{table}[h!]\centering\renewcommand{\arraystretch}{1.4}
    \begin{tabular}{|p{0.3\textwidth}|p{0.3\textwidth}|}
        \hline
        $Q(x_1\ve_1)$ & $\tilde{Q}(a_0,a_1\ve_1^*)$ \\
        \hline\hline
                      & $=a_0^2+a_0a_1$ \\
        $=0$          &                 \\
        $=x_1^2$      &                 \\
        \hline
    \end{tabular}
    \caption{$\dim\vV=1$ and $|F|=2$}\label{tab:2}
\end{table}
\par
Let $\dim\vV=1$ and $|F|=3$. The first two rows of Table~\ref{tab:3} contain
all solutions $Q,\tilde{Q}$ of \eqref{eq:oder1} with $\tilde{Q} = Q^\uparrow$.
The zero quadratic form on $\vV$ appears on the left hand side of the third
row, since it satisfies \eqref{eq:oder1} together with each of the two
quadratic forms on the right hand side. So, the left column of
Table~\ref{tab:3} comprises \emph{all} quadratic forms on $\vV$. As we noted
above, a quadratic form $\tilde{Q}$ may only appear in the right column if it
meets one of the three conditions of
Corollary~\ref{cor:trans}~\eqref{cor:trans.b}. Since neither
\eqref{eq:cor-trans=1} nor \eqref{eq:cor-trans=2} can be satisfied, there
remains \eqref{eq:cor-trans}. All $\tilde{Q}$ subject to \eqref{eq:cor-trans}
are listed in the right column of the table, as follows from
Proposition~\ref{prop:Q-auf-ab}. Consequently, the right column already has
been filled up completely.
\begin{table}[h!]\centering\renewcommand{\arraystretch}{1.4}
    \begin{tabular}{|p{0.3\textwidth}|p{0.3\textwidth}|}
        \hline
        $Q(x_1\ve_1)$ & $\tilde{Q}(a_0, a_1 \ve_1^*)$  \\
        \hline\hline
        $=x_1^2$      &  $=a_1^2$   \\
        $=-x_1^2$     &  $=-a_1^2$  \\
        $=0$          &             \\
        \hline
    \end{tabular}
    \caption{$\dim\vV=1$ and $|F|=3$}\label{tab:3}
\end{table}
\par
Let $\dim\vV=2$ and $|F|=2$. There are four solutions $Q,\tilde{Q}$ of
\eqref{eq:oder1} with $\tilde{Q} = Q^\uparrow$. They can be read off from the
first, third, fifth and seventh row of Table~\ref{tab:4}. We put these rows
into four different blocks, since the corresponding quadratic forms on $\vV$
have mutually distinct orthogonal groups; see below. There are no more entries
in the right column: This follows, as in the previous cases, since no quadratic
form on $\FV^*$ satisfies one of the conditions \eqref{eq:cor-trans=1} or
\eqref{eq:cor-trans=2} of Corollary~\ref{cor:trans}~\eqref{cor:trans.b}.
However, it turns out that in each block there is precisely one more entry on
the left hand side. The two quadratic forms on $\vV$ from the first block have
$\GL(\vV)$ as their common orthogonal group. The two quadratic forms on $\vV$
from any of the remaining blocks also share a common orthogonal group, namely
the stabiliser in $\GL(\vV)$ of a particular non-zero vector. More precisely,
for the second, third and fourth block, this vector reads $\ve_1+\ve_2$,
$\ve_1$ and $\ve_2$, respectively.
\begin{table}[h!]\centering\renewcommand{\arraystretch}{1.4}
    \begin{tabular}{|p{0.3\textwidth}|p{0.3\textwidth}|}
        \hline
        $Q(x_1\ve_1+x_2\ve_2)$ & $\tilde{Q}(a_0,a_1\ve_1^*+a_2 \ve_2^*)$ \\
        \hline\hline
        $=x_1^2+x_1x_2+x_2^2$ & $=a_1^2+a_1a_2+a_2^2$            \\
        $=0$                  &                                  \\
        \hline
        $=x_1x_2$             & $=a_1a_2$                        \\
        $=x_1^2+x_2^2$        &                                  \\
        \hline
        $=x_1^2+x_1x_2$       & $=a_1^2+a_1a_2$                  \\
        $=x_2^2$              &                                  \\
        \hline
        $=x_1x_2+x_2^2$       & $=a_1a_2+a_2^2$                  \\
        $=x_1^2$              &                                  \\
        \hline
    \end{tabular}
    \caption{$\dim\vV=2$ and $|F|=2$}\label{tab:4}
\end{table}
\end{rem}

To close this paper, let us address another question: Is there a way to
describe, with the techniques at our disposal, the motion group or the weak
motion group of additional affine metric spaces $\bA\VQ$ by changing over to
the projective space $\bP(\FV^*)$? The idea behind is that in this way one
gains an additional ``degree of freedom'', since a projective collineation of
$\bP(\FV^*)$ onto itself is induced by $|F^\times|$ linear bijections differing
by non-zero scalar factors. Furthermore, nothing is lost by the transition to
$\bP(\FV^*)$, since the linear representation $\beta$ of $\AGL(\vV)$, as in
\eqref{eq:beta}, has a property that goes beyond its being faithful: The
$\beta$-images of distinct affinities are not proportional and so they act
differently on $\bP(\FV^*)$.
\par
We proceed by writing up a version of Lemma~\ref{lem:trans-skal} in the same
way as Corollary~\ref{cor:trans} resembles Lemma~\ref{lem:trans}. Then we
answer the raised question.

\begin{cor}\label{cor:trans-skal}
Let $\vV$ be a vector space and let $\tilde{Q}\colon\FV^*\to F$ be a quadratic
form. Then, with $\beta$ as in \eqref{eq:beta}, for all $s\in
F\setminus\{0,1\}$ and all non{\trenn}identical translations
$\gamma\in\AGL(\vV)$, the mapping $s\gamma^{\,\beta}$ does not belong to the
weak orthogonal group $\Oweak(\FV^*,\tilde{Q})$.
\end{cor}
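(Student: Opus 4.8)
The plan is to reduce the claim directly to Lemma~\ref{lem:trans-skal}, exploiting the explicit description of the $\beta$-image of a translation already obtained in the proof of Corollary~\ref{cor:trans}. First I would recall that a translation $\gamma\in\AGL(\vV)$, written as in \eqref{eq:gamma} with $\gamma_+=\id_\vV$ and translation vector $\vt\in\vV$, satisfies
\[
    \gamma^{\,\beta}=\delta_{(0,-\vt),\,(1,\vo^*)}\in\Delta\bigl(\FV^*,(1,\vo^*)\bigr),
\]
so that $\gamma^{\,\beta}$ is the element of $\Delta\bigl(\FV^*,(1,\vo^*)\bigr)$ determined by the vector $(1,\vo^*)\in\FV^*$ and the linear form $(0,-\vt)\in\FV$.

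Next I would observe that the hypothesis that $\gamma$ is non{\trenn}identical is equivalent to $\vt\neq\vo$, whence $(0,-\vt)$ is a \emph{non-zero} element of $\FV$. Moreover $(0,-\vt)$ lies in the annihilator $\{(1,\vo^*)\}^\circ\leq\FV$, since the latter consists precisely of those $(x_0,\vx)$ with vanishing first coordinate, as noted in the proof of Corollary~\ref{cor:trans}. These two facts supply exactly the input demanded by Lemma~\ref{lem:trans-skal}, under the substitution in which the ``non-zero vector $\vf$'' and the ``metric vector space $\VQ$'' from there are replaced by the vector $(1,\vo^*)$ and the metric vector space $(\FV^*,\tilde{Q})$, while the non-zero $\va^*\in\{\vf\}^\circ$ is replaced by $(0,-\vt)$.

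Invoking Lemma~\ref{lem:trans-skal} with these substitutions then gives, for every $s\in F\setminus\{0,1\}$, that $s\gamma^{\,\beta}=s\,\delta_{(0,-\vt),\,(1,\vo^*)}$ does not belong to $\Oweak(\FV^*,\tilde{Q})$, which is the assertion. As the argument is purely a transcription---mirroring the way Corollary~\ref{cor:trans} was derived from Lemma~\ref{lem:trans}---there is no genuine obstacle; the only point needing (routine) care is the bookkeeping of which space plays the role of $\vV$ and which vector plays the role of $\vf$ in Lemma~\ref{lem:trans-skal}, together with the immediate verifications that $(0,-\vt)$ is non-zero and annihilates $(1,\vo^*)$.
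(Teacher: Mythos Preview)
Your proposal is correct and follows precisely the approach the paper itself indicates: the text preceding Corollary~\ref{cor:trans-skal} states that it is obtained from Lemma~\ref{lem:trans-skal} ``in the same way as Corollary~\ref{cor:trans} resembles Lemma~\ref{lem:trans}'', and the paper gives no further proof. Your transcription---identifying $\gamma^{\,\beta}$ with $\delta_{(0,-\vt),(1,\vo^*)}$, checking that $(0,-\vt)$ is a non-zero element of $\{(1,\vo^*)\}^\circ$, and then invoking Lemma~\ref{lem:trans-skal} with the appropriate substitutions---is exactly what is intended.
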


\begin{thm}\label{thm:proj}
Let $\VQ$ and $(\FV^*, \tilde{Q})$ be metric vector spaces such that
$\Oweak(\FV^*,\tilde{Q})$ induces the same collineation group on the projective
space $\bP(\FV^*)$ as one of the groups $\AOrth\VQ^{\,\beta}$ or
$\AOweak\VQ^{\,\beta}$, where $\beta$ is given as in \eqref{eq:beta}. Then
$\AOrth\VQ^{\,\beta} = \Oweak(\FV^*,\tilde{Q}) $, unless $\dim\vV=0$, $\Char
F\neq 2$ and $\tilde{Q}(\FV^*)\neq\{0\}$.
\end{thm}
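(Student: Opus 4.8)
The plan is to treat the degenerate case $\dim\vV=0$ separately, since it produces the stated exception, and then to reduce the generic case $\dim\vV\ge 1$ to the propositions and lemmata already in hand, the key preliminary step being to force the translations into $\Oweak(\FV^*,\tilde{Q})$.

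First I would settle $\dim\vV=0$ by direct inspection. Here $\FV^*=F(1,\vo^*)$ is one{\trenn}dimensional and $\AGL(\vV)$ is trivial, so $\AOrth\VQ^{\,\beta}=\AOweak\VQ^{\,\beta}=\{\id_{\FV^*}\}$. Since $\GL(\FV^*)=F^\times\id_{\FV^*}$ consists entirely of scalars, \emph{every} subgroup induces the trivial collineation group on the single point $\bP(\FV^*)$; thus the hypothesis is vacuously met for each $\tilde{Q}$, and it remains to compute $\Oweak(\FV^*,\tilde{Q})$. If $\tilde{Q}=0$, or if $\tilde{Q}\neq 0$ with $\Char F=2$, then $\tilde{B}$ vanishes, its radical is all of $\FV^*$, and only $\id_{\FV^*}$ fixes that radical elementwise, whence $\Oweak(\FV^*,\tilde{Q})=\{\id_{\FV^*}\}=\AOrth\VQ^{\,\beta}$. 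If $\tilde{Q}\neq 0$ and $\Char F\neq 2$, then $\Oweak(\FV^*,\tilde{Q})=\{\pm\id_{\FV^*}\}$ has order two and the conclusion fails — precisely the announced exceptional case.

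For $\dim\vV\ge 1$, write $G$ for whichever of $\AOrth\VQ^{\,\beta}$, $\AOweak\VQ^{\,\beta}$ shares its collineation group with $W:=\Oweak(\FV^*,\tilde{Q})$, and recall that both contain the $\beta$-image of every translation. Given a non{\trenn}identical translation $\gamma$, the class $[\gamma^{\,\beta}]$ lies in the common image in $\PGL(\FV^*)$, so $\gamma^{\,\beta}=t\phi$ with $\phi\in W$ and $t\in F^\times$; applying Corollary~\ref{cor:trans-skal} to $t^{-1}\gamma^{\,\beta}=\phi\in W$ forces $t=1$, i.e.\ $\gamma^{\,\beta}\in W$. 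Hence the $\beta$-image of the translation group lies in $W$, and Corollary~\ref{cor:trans} leaves only \eqref{eq:cor-trans} or \eqref{eq:cor-trans=2}, since \eqref{eq:cor-trans=1} would require $\dim\vV=0$. Case \eqref{eq:cor-trans=2} is easy: there $\dim\vV=1$ and $|F|=2$, so $F^\times$ is trivial, $\PGL(\FV^*)=\GL(\FV^*)$ forces $W=G$ as subgroups, and $\Orth\VQ\le\GL(\vV)=\{\id_\vV\}$ gives $\AOrth\VQ=\AOweak\VQ=\AGL(\vV)$, whence $G=\AGL(\vV)^{\,\beta}=\AOrth\VQ^{\,\beta}=W$.

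In the main case \eqref{eq:cor-trans} the radical of $\tilde{B}$ is $F(1,\vo^*)$, so $W$ fixes $F(1,\vo^*)$ elementwise and therefore lies in $\AGL(\vV)^{\,\beta}$ by Lemma~\ref{lem:AGL-rep}; meanwhile $\tilde{Q}^\downarrow$ is defined, non{\trenn}degenerate, and satisfies $(\tilde{Q}^\downarrow)^\uparrow=\tilde{Q}$ by Proposition~\ref{prop:Q-auf-ab}, so Proposition~\ref{prop:AO-beta=Ow} yields $W=\AOrth(\vV,\tilde{Q}^\downarrow)^{\,\beta}$. Because the projection $\GL(\FV^*)\to\PGL(\FV^*)$ is injective on $\AGL(\vV)^{\,\beta}$ — distinct $\beta$-images all fix $(1,\vo^*)$ and so are never proportional — the equality of projective images upgrades to $W=G$. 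If $G=\AOrth\VQ^{\,\beta}$ this is the claim; the hard part is the alternative $G=\AOweak\VQ^{\,\beta}$, where taking $\beta${\trenn}preimages gives $\Oweak\VQ=\Orth(\vV,\tilde{Q}^\downarrow)$ and I must still prove $\Orth\VQ=\Oweak\VQ$. Lemma~\ref{lem:Orth1=2} (with $Q_1=\tilde{Q}^\downarrow$, $Q_2=Q$, second alternative) delivers $\tilde{Q}^\downarrow=cQ$ at once, whence $B$ is non{\trenn}degenerate and $\Orth\VQ=\Oweak\VQ$; but, unlike Theorem~\ref{thm:umkehr}, the present statement does not exclude $\dim\vV=1,|F|=3$ or $\dim\vV=2,|F|=2$, so in those two configurations I would argue directly — a degenerate $Q$ there forces $\Oweak\VQ=\{\id_\vV\}$, contradicting $\Oweak\VQ=\Orth(\vV,\tilde{Q}^\downarrow)$, whose order exceeds one — again giving $\Orth\VQ=\Oweak\VQ$ and hence $\AOrth\VQ^{\,\beta}=W$.
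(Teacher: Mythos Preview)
Your argument is correct and follows the same overall strategy as the paper: force the translations into $\Oweak(\FV^*,\tilde{Q})$ via Corollary~\ref{cor:trans-skal}, invoke Corollary~\ref{cor:trans}, and then exploit that distinct elements of $\AGL(\vV)^{\,\beta}$ are never proportional to upgrade the projective equality to a genuine one. The paper phrases the last step as ``$s_\gamma=1$'' and then appeals to Theorem~\ref{thm:umkehr} as a black box; you unpack this and go straight to Proposition~\ref{prop:AO-beta=Ow} and Lemma~\ref{lem:Orth1=2}, which amounts to the same thing.

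The one genuine difference is your treatment of the small exceptional configurations $\dim\vV=1,|F|\le 3$ and $\dim\vV=2,|F|=2$. The paper disposes of these by pointing to the exhaustive Tables~\ref{tab:2}--\ref{tab:4} of Remark~\ref{rem:tab}, whereas you give a direct argument: a degenerate $B$ in those ranges would force $\vV^\perp=\vV$ (using that in characteristic~$2$ the rank of $B$ is even), hence $\Oweak\VQ=\{\id_\vV\}$, contradicting $\Oweak\VQ=\Orth(\vV,\tilde{Q}^\downarrow)$, which always contains a nontrivial reflection or $-\id_\vV$. Your route is more self-contained and avoids the case analysis of the tables; the paper's route has the virtue of reusing work already done, but at the cost of depending on that enumeration.
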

\begin{proof}
\emph{Case} 1: $\dim\vV=0$. Then $\AOrth\VQ^{\,\beta} = \AOweak\VQ^{\,\beta} =
\{\id_\vV\}^{\,\beta} = \{\id_{\FV^*}\}$. If $\Char F=2$ or
$\tilde{Q}(\FV^*)=\{0\}$, then the radical of $\tilde{B}$ equals $\FV^*$ and so
$\Oweak(\FV^*,\tilde{Q}) = \{\id_{\FV^*}\}$. If $\Char F\neq 2$ and
$\tilde{Q}(\FV^*)\neq\{0\}$, then the groups $\Oweak(\FV^*,\tilde{Q}) =
\{\pm\id_{\FV^*}\}$ and $\{\id_{\FV^*}\}$ are distinct, even though they
determine the same (trivial) group of collineations on $\bP(\FV^*)$.
\par
\emph{Case} 2: $\dim\vV\geq 1$. By our assumptions, for each
$\gamma\in\AOrth\VQ$ or for each $\gamma\in\AOweak\VQ$ there is at least one
scalar $s_\gamma \in F^\times$ such that
$s_\gamma\gamma^{\,\beta}\in\Oweak(\FV^*,\tilde{Q})$. We claim that any such
$s_\gamma$ has to be $1\in F$. This is obvious when $|F|=2$. Up to the end of
the current paragraph, we therefore assume $|F|\geq 3$. If $\gamma$ is an
arbitrary non{\trenn}trivial translation of $\vV$, then
$\gamma^{\,\beta}\in\Oweak(\FV^*)$ follows from Corollary~\ref{cor:trans-skal}.
This implies, together with $\id_\vV^{\,\beta}\in\Oweak(\FV^*,\tilde{Q})$, that
condition \eqref{cor:trans.a} of Corollary~\ref{cor:trans} is satisfied. Hence
the equivalent condition \eqref{cor:trans.b} from there is satisfied too. Since
\eqref{eq:cor-trans=1} and \eqref{eq:cor-trans=2} are false in the present
setting, we read off from \eqref{eq:cor-trans} that the radical of $\tilde{B}$
equals $F(1,\vo^*)$. Now, returning to an arbitrary $\gamma$ as described
above, Lemma~\ref{lem:AGL-rep} gives
$s_\gamma\gamma^{\,\beta}(1,\vo^*)=(s_\gamma,\vo^*)$, whereas
$s_\gamma\gamma^{\,\beta}\in\Oweak(\FV^*,\tilde{Q})$ forces
$s_\gamma\gamma^{\,\beta}(1,\vo^*)=(1,\vo^*)$. Therefore $s_\gamma=1$.
\par
By the above, the hypotheses of Theorem~\ref{thm:umkehr} are fulfilled provided
that neither \eqref{eq:dim=1} nor \eqref{eq:dim=2} is satisfied. So, up to
these cases, Theorem~\ref{thm:umkehr}~\eqref{thm:umkehr.a} gives the even
stronger result $\AOrth\VQ^{\,\beta} = \AOweak\VQ^{\,\beta}
=\Oweak(\FV^*,\tilde{Q})$. Otherwise, the claim follows from
Tables~\ref{tab:2}, \ref{tab:3} and \ref{tab:4} in Remark~\ref{rem:tab}.
\end{proof}

All things considered, up to a single trivial case, adopting the projective
point of view fails to significantly amplify the scope of our approach.
\begin{rem}
Tables~1 and 3 in \cite{klaw+h-13a} (and likewise Tables~3.1 and 3.3 in
\cite[pp.~192--195]{klaw-15a}) about metric vector spaces over $\bR$ appear to
be partially incorrect. The reason is that these tables contain---using our
terminology---several dyads of metric vector spaces $\VQ$ and
$(\FV^*,\tilde{Q})$, where the polar form of $Q$ is degenerate and $F=\bR$.
Moreover, it is claimed (without giving a formal proof) that these dyads
satisfy the assumptions of Theorem~\ref{thm:proj}, which seems impossible by
the above proof and Theorem~\ref{thm:umkehr}.
\end{rem}

\small

\noindent
Hans Havlicek\\
Institut f\"{u}r Diskrete Mathematik und Geometrie\\
Technische Universit\"{a}t Wien\\
Wiedner Hauptstra{\ss}e 8--10/104\\
1040 Wien\\
Austria\\
\texttt{havlicek@geometrie.tuwien.ac.at}
\end{document}